\newcommand{\clarissa}[1]{\textcolor{magenta}{\textbf{#1}}}
\numberwithin{equation}{section}
\newtheorem{theorem}{Theorem}
\newtheorem{proposition}{Proposition}
\newtheorem{remark}{Remark}
\def\diver{\mathrm{div}}
\def\d{\,\mathrm{d}}
\def\eps{\varepsilon}
\def\R{\mathbb{R}}
\def\C{\hbox{\rlap{\kern.24em\raise.1ex\hbox
      {\vrule height1.3ex width.9pt}}C}}
\def\P{\hbox{\rlap{I}\kern.16em P}}
\def\Q{\hbox{\rlap{\kern.24em\raise.1ex\hbox
      {\vrule height1.3ex width.9pt}}Q}}
\def\M{\hbox{\rlap{I}\kern.16em\rlap{I}M}}
\def\Z{\hbox{\rlap{Z}\kern.20em Z}}
\def\({\begin{eqnarray}}
\def\){\end{eqnarray}}
\def\[{\begin{eqnarray*}}
\def\]{\end{eqnarray*}}
\def\part#1#2{\frac{\partial #1}{\partial #2}}
\def\grad{\nabla}
\def\pmb#1{\setbox0=\hbox{$#1$}
  \kern-.025em\copy0\kern-\wd0
  \kern-.05em\copy0\kern-\wd0
  \kern-.025em\raise.0433em\box0 }
\def\bar{\overline}
\def\d{\,\mathrm{d}}
\def\R{\mathbb{R}}
\def\epsilon{\varepsilon}
\def\P{\mathbb{P}}
\def\Q{\mathbb{Q}}
\newcommand{\dx}{\mathrm{d}x}
\def\cJH#1{\textcolor{blue}{\bf [#1]}}
\title[Self-regulated biological transportation structures in 1D]{Self-regulated biological transportation structures with general entropy dissipations, part I: the 1D case}
\begin{document}
\pagenumbering{gobble}
\maketitle
\pagenumbering{arabic}

\centerline{
     {\large Clarissa Astuto}\footnote{Mathematical and Computer Sciences and Engineering Division,
         King Abdullah University of Science and Technology,
         Thuwal 23955-6900, Kingdom of Saudi Arabia;
         {\it clarissa.astuto@kaust.edu.sa}} \qquad
     {\large Jan Haskovec}\footnote{Mathematical and Computer Sciences
            and Engineering Division,
         King Abdullah University of Science and Technology,
         Thuwal 23955-6900, Kingdom of Saudi Arabia;
         {\it jan.haskovec@kaust.edu.sa}}\qquad
     {\large Peter Markowich}\footnote{Mathematical and Computer Sciences and Engineering Division,
         King Abdullah University of Science and Technology,
         Thuwal 23955-6900, Kingdom of Saudi Arabia;
         {\it peter.markowich@kaust.edu.sa}, and
         Faculty of Mathematics, University of Vienna,
        Oskar-Morgenstern-Platz 1, 1090 Vienna;
         {\it peter.markowich@univie.ac.at}}\qquad
         {\large Simone Portaro}\footnote{Mathematical and Computer Sciences
            and Engineering Division,
         King Abdullah University of Science and Technology,
         Thuwal 23955-6900, Kingdom of Saudi Arabia;
         {\it simone.portaro@kaust.edu.sa}}
     }
\medskip \medskip

\section*{abstract}
We study self-regulating processes modeling biological transportation networks as presented in \cite{portaro2023}. In particular, we focus on the 1D setting for Dirichlet and Neumann boundary conditions. We prove an existence and uniqueness result under the assumption of positivity of the diffusivity $D$. We explore systematically various scenarios and gain insights into the behavior of $D$ and its impact on the studied system. This involves analyzing the system with a signed measure distribution of sources and sinks. Finally, we perform several numerical tests in which the solution $D$ touches zero, confirming the previous hints of local existence in particular cases.  
 
\section{Introduction}
In recent years, there has been growing interest in studying principles and mechanisms of formation of biological transportation structures.
This line of research has numerous applications beyond biology, in particular in fields such as medicine, chemistry and engineering. 
This paper contributes to the discussion by studying the spatially one--dimensional version of self-regulated processes modeling biological transportation structures.
Such structures, in particular organization of leaf venation networks, vascular and neural networks, have been in the focus of biophysical and mathematical research communities, see, e.g., the recent works \cite{astuto2022comparison, astuto2023asymmetry, astuto2023finite, morel, couder, Corson, magnasco}. One is typically interested in geometric, topological and statistical properties of the optimal transportation structures, where the optimality criteria typically involve a trade-off between cost and transportation efficiency \cite{Barthelemy}. Biological transportation structures develop without centralized control \cite{Tero} and therefore they can be considered as emergent structures resulting from self-regulating processes.

In the recent work \cite{portaro2023} a class of self-regulating processes has been introduced,
which is governed by minimization of the entropy dissipation
\(  
    \label{eq:energy_functional1}
    E[D] = \int_{\Omega} \Phi''(u) \grad u \cdot D \grad u \dx.
\)
The functional $E=E[D]$ is to be minimized with respect to the symmetric and positive semidefinite diffusivity tensor field $D = D(x)$.
Here $\Phi : \R \rightarrow \R$ is the convex entropy generating function and $u = u(x)$ denotes the concentration of the transported medium, e.g., chemical species, ions or nutrients.
Moreover, $\Omega \subset \R^d$, $d \ge 1$ is a domain with $C^1$ boundary.

The functional \eqref{eq:energy_functional1} is coupled to the mass conservation equation
\(
    \label{eq:conservation_law}
    - \diver ( D \grad u) = S \quad \mathrm{in} \, \, \Omega,
\)
where $S = S(x)$ represents the prescribed, time-independent distribution of sources and sinks of the system. We equip \eqref{eq:conservation_law}
with the Dirichlet boundary condition
\(
    \label{eq:boundary_u}
    u = c \quad \mathrm{on} \, \,  \partial\Omega. \notag
\)
Here $c\in\R$ is a constant representing the equilibrium of the system. Indeed, we assume $\Phi$ to have a critical point at $c$, i.e.,
\(
    \label{eq:equilibrium_state}
    \Phi'(c) = 0. 
\)

The formal $L^2$-gradient flow of the energy functional \eqref{eq:energy_functional1} constrained by \eqref{eq:conservation_law} is of the form
\(
    \label{eq:GF}
    \part{D}{t} = \Phi''(u) \nabla u \otimes \nabla u + \frac{\nabla\sigma\otimes\nabla u + \nabla u\otimes\nabla\sigma}{2} \qquad \mbox{in } (0, \infty) \times \Omega,
\)
see Lemma 1 of \cite{portaro2023} for details.
Here $t\geq 0$ is the time-like variable induced by the gradient flow and $\sigma = \sigma (x, t)$ is the solution of the boundary value problem
\(
    \label{eq:sigma_law}
    - \diver (D \nabla \sigma) = \Phi'''(u) \nabla u \cdot D \nabla u \qquad \mathrm{in} \, \, (0, \infty) \times \Omega,
\)
subject to the homogeneous Dirichlet boundary condition $\sigma = 0$ on $\partial \Omega$.

In this paper we focus on the spatially one--dimensional setting of (\ref{eq:GF}--\ref{eq:sigma_law}). In particular, we shall formulate sufficient conditions for global existence of solutions. Moreover, we provide insights into global and local well-posedness propeties of the system by discussing several special cases and constructing relevant examples. The last part of the paper is dedicated to an appropriate discretization of 
(\ref{eq:GF}--\ref{eq:sigma_law}) presentation of several numerical examples.

While investigating a numerical scheme that solves the issue of different scales in time, a solution has been proposed that treats a specific component of the system implicitly, while keeping the rest explicit. By adopting this approach, it becomes possible to create a time discretization wherein the implicit portion of the system can be inverted relatively easily, often bypassing the need for solving nonlinear systems. At the same time, the explicit part maintains its accuracy and capability to capture the stiffness. The numerical scheme that addresses these issues is the IMEX Runge--Kutta scheme \cite{IMEX, pareschi2000implicit,pareschi2005implicit} and it becomes possible to develop high-order linearly implicit schemes using the conventional Runge--Kutta methods. They are generally associated to a Butcher's tableau, that is composed by a matrix of coefficients $A = \{a_{i,j} \}$, a vector of weights $b_i$, and a vector of nodes $c_i$.

In this paper we shall employ this methodology to obtain efficient semi--implicit discretizations that offer high order accuracy in the framework of biological transportation structures. IMEX schemes are proven to be very effective for many applications, e.g., for Navier-Stokes equations \cite{boscheri2021high}, Euler equations \cite{avgerinos2019linearly} and for generic high order PDEs \cite{sebastiano2023high}. In this paper we make use of a third--order semi--implicit scheme in time for the spatially one--dimensional version of (\ref{eq:GF}--\ref{eq:sigma_law}).

\section{The spatially 1D setting}
Without loss of generality, let $\Omega = (0, 1)$ and the equilibrium $c=0$. The one--dimensional version of \eqref{eq:conservation_law}, \eqref{eq:sigma_law} and \eqref{eq:GF} reads 
\begin{subequations}
\begin{numcases}{}
    -(D u_x)_x = S,  \label{eq:conservation_law_1D} \\
    -(D \sigma_x)_x = \Phi'''(u) D u_x^2, \label{eq:sigma_law_1D} \\
    D_t = \Phi''(u) u_x^2 + u_x \sigma_x, \label{eq:GF_1D}
\end{numcases}
\end{subequations}
for all $(x, t) \in (0,1) \times (0, \infty)$,
where $u_x$ denotes the derivative of $u$ with respect to the spatial variable. The system is
equipped with homogeneous Dirichlet boundary conditions for $u$ and $\sigma$ and with uniformly positive and bounded initial condition
\(  
    \label{eq:initial_condition}
    D(0, x) = D_I (x) \ge \gamma > 0 \quad \forall x \in [0, 1], \quad D_I \in L^{\infty}(0, 1).
\)
To simplify the notation, we will denote the explicit dependence of the quantities of interest on either the spatial coordinate $x$ or temporal coordinate $t$ only when the quantity in question is dependent solely on one of these variables. Otherwise, we will omit it.

System (\ref{eq:conservation_law_1D}--\ref{eq:GF_1D}) can be formulated in an integro-differential form. By integrating \eqref{eq:conservation_law_1D} with respect to $x$ we get
\begin{align}
    \label{eq:ux_1}
    u_x = \frac{\alpha(t) - R(x)}{D},
\end{align}
where $\alpha=\alpha(t)$ is constant in space and
\(
    \label{eq:R}
    R(x) := \int_0^x S(y) \d y. \notag
\)
Another subsequent integration of (\ref{eq:ux_1}) in $\Omega$, exploiting the homogeneous Dirichlet boundary condition for $u$, leads to
\begin{align}
    \alpha(t) = \frac{\bigintsss_0^1 \frac{R(y)}{D} \d y}{\bigintsss_0^1 \frac{\d y}{D} }. \notag
\end{align}
Moreover, restricting the integration to $(0, x)$, we get the explicit expression for $u$,
\begin{align}
    \label{eq:u_1}
    u = \alpha(t) \int_0^x \frac{\d y}{D(y)} - \int_0^x \frac{R(y)}{D} \d y.
\end{align}
Performing a similar reasoning for \eqref{eq:sigma_law_1D}, we obtain
\begin{align}
    -D \sigma_x + \beta(t) &= \Phi''(u) D u_x - \int_0^x \Phi''(u) (D u_x)_x \d y \notag \\
    &= \Phi''(u) D u_x + \int_0^x \Phi''(u) S(y) \d y, \notag
\end{align}
which can be reformulated in the more convenient way
\(
    \label{eq:sigma_x_1}
    \sigma_x = \frac{\beta(t)}{D} - \Phi''(u) u_x - \frac{1}{D} \int_0^x \Phi''(u) S(y) \d y,
\)
with
\begin{align}
    \beta(t) = \frac{\bigintsss_0^1 \frac{1}{D} \bigintsss_0^y \Phi''(u) S(z) \d z \d y  }{ \bigintsss_0^1 \frac{\d y}{D} }, \notag
\end{align}
obtained through an integration of \eqref{eq:sigma_x_1} and employing \eqref{eq:equilibrium_state}.
Finally, substitution of \eqref{eq:ux_1} and \eqref{eq:sigma_x_1} into \eqref{eq:GF_1D} gives
\begin{align}
    D_t 
    &= - \frac{u_x}{D} \left( \int_0^x \Phi''(u) S(y) \d y - \beta(t) \right) \notag \\
    &= \left( \frac{R(x) - \alpha(t)}{D^2} \right) \left( \int_0^x \Phi''(u) S(y) \d y - \beta(t) \right), \notag
\end{align}
which we recast as
\begin{align}
    \label{eq:Dt_1}
    D^2 D_t &= \left( \frac{ \bigintsss_0^1 \frac{R(x) - R(y)}{D} \d y } { \bigintsss_0^1 \frac{\d y}{D}} \right) \left( \frac{ \bigintsss_0^1 \frac{ \bigintsss_0^x \Phi''(u) S(z) \d z - \bigintsss_0^y \Phi''(u) S(z) \d z}{D} \d y } { \bigintsss_0^1 \frac{\d y}{D}} \right) \notag \\
    &= \left( \frac{ \bigintsss_0^1 \frac{\bigintsss_y^x S(z) \d z}{D} \d y } { \bigintsss_0^1 \frac{\d y}{D}} \right) \left( \frac{ \bigintsss_0^1 \frac{\bigintsss_y^x \Phi''(u) S(z) \d z}{D} \d y } { \bigintsss_0^1 \frac{\d y}{D}} \right).
\end{align}
In order to give a meaning to the elliptic equations (\ref{eq:conservation_law_1D}--\ref{eq:sigma_law_1D}) we look for a solution $D > 0$.

It is immediate to notice that $u$ is bounded in terms of the $L^1$ norm of $\frac{1}{D}$: 
\begin{proposition}\label{proposition_ubounded}
    Let $u$ be given by equation \eqref{eq:u_1}. Then for all $t\geq0$,
    \begin{align}
        \label{eq:ubound_1/d}
        \| u(\cdot, t) \|_{L^{\infty}(0,1)} \le \| S \|_{L^1 (0,1)} \bigg{\|} \frac{1}{D(\cdot, t)} \bigg{\|}_{L^1 (0,1)}.
    \end{align}
    \begin{proof}
        Since $u=0$ for $x\in \{ 0,1 \}$, it follows that the function $| u |$ attains its maximum at some point $x_m \in (0,1)$.
        Then, due to the differentiability of $u$, we have $u_x = 0$ at $x = x_m$, so that
        \begin{align}
            R(x_m) = \frac{\bigintsss_0^1 \frac{R(y)}{D}\d y}{\bigintsss_0^1 \frac{\d y}{D}}. \notag
        \end{align}
        Then, for some $\xi(t) \in (0, 1)$ we get the following estimate
        \begin{align}
            | u(x_m) | &= \bigg| R(x_m) \bigintsss_0^{x_m} \frac{\d y}{D} - \bigintsss_0^{x_m} \frac{R(y)}{D} \d y \bigg| \notag \\
            &= \bigg| \left( R(x_m) - R(\xi(t) \right) \bigintsss_0^{x_m} \frac{\d y}{D} \bigg| \notag \\
            &\le \| S \|_{L^1 (0, 1)} \bigintsss_0^1 \frac{1}{|D|} \d y, \notag
        \end{align}
        which concludes the proof.
    \end{proof}
\end{proposition}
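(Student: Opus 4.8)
The plan is to prove the $L^\infty$ bound on $u$ by a standard maximum-principle-type argument, exploiting the explicit representation \eqref{eq:u_1} together with the homogeneous Dirichlet data. First I would observe that since $u$ vanishes at both endpoints $x=0$ and $x=1$, the continuous function $|u|$ must attain its maximum over $[0,1]$ at some interior point $x_m \in (0,1)$. At that interior maximum the spatial derivative vanishes, $u_x(x_m,t)=0$, and feeding this into the integro-differential identity \eqref{eq:ux_1} forces the numerator to vanish there, i.e. $\alpha(t) = R(x_m)$. Substituting $\alpha(t)=R(x_m)$ back into \eqref{eq:u_1} evaluated at $x=x_m$ collapses the expression to
\[
u(x_m,t) = R(x_m)\int_0^{x_m}\frac{\d y}{D} - \int_0^{x_m}\frac{R(y)}{D}\,\d y = \int_0^{x_m}\frac{R(x_m)-R(y)}{D}\,\d y.
\]

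Next I would estimate this single integral. Writing $R(x_m)-R(y) = \int_y^{x_m} S(z)\,\d z$, we have $|R(x_m)-R(y)| \le \|S\|_{L^1(0,1)}$ uniformly in $y$ (alternatively, one invokes a mean value / intermediate value argument to write $R(x_m)-R(y) = R(x_m)-R(\xi(t))$ up to the same bound, as the author's proof does). Pulling this uniform bound out of the integral and bounding $\int_0^{x_m} |D|^{-1}\,\d y \le \int_0^1 |D|^{-1}\,\d y = \|1/D\|_{L^1(0,1)}$ — here using that $D>0$ so $|D|=D$ — yields $|u(x_m,t)| \le \|S\|_{L^1(0,1)}\,\|1/D(\cdot,t)\|_{L^1(0,1)}$, which is exactly \eqref{eq:ubound_1/d}.

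There is essentially no serious obstacle here; the only point requiring a little care is the justification that $|u|$ attains an interior maximum and that $u$ is differentiable there so that $u_x(x_m,t)=0$ — this follows from the regularity implicit in the representation \eqref{eq:u_1}, which is a continuous (indeed, with $u_x$ given by \eqref{eq:ux_1}, absolutely continuous) function of $x$ whenever $1/D \in L^1$. One should also note the degenerate edge case $x_m$ arbitrarily close to an endpoint, but the bound on $\int_0^{x_m}$ is monotone in $x_m$ so this causes no trouble. The estimate is uniform in $t\ge 0$ because the argument at each fixed time uses only the instantaneous profile $D(\cdot,t)$, and nothing in the derivation referenced the evolution equation \eqref{eq:GF_1D} or \eqref{eq:Dt_1}.
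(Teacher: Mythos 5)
Your argument is correct and follows essentially the same route as the paper's proof: interior maximum of $|u|$, vanishing of $u_x$ at $x_m$ to identify $\alpha(t)=R(x_m)$, and the bound $|R(x_m)-R(y)|\le\|S\|_{L^1(0,1)}$ (the paper phrases this via a mean value point $\xi(t)$, which you note as an equivalent alternative). No substantive difference.
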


Furthermore, if $\Phi''$ is bounded on $\R$, it is possible to obtain a uniform bound (on every finite time interval) for $\Phi'(u)$ which is independent of $\int_0^1 \frac{\d y}{D(y)}$.

\begin{proposition}\label{boundness_phi'}
    Let $S \in L^1(0, 1)$, $D_I \in L^1_+ (0, 1)$ with \eqref{eq:initial_condition} and $\Phi'' \in L^{\infty}(\R)$.
    If $D(x, t) \ge 0$ for $(x, t) \in [0, 1] \times [0, T]$, with $0<T<\infty$, then there exists $C(T) > 0$ such that
    \(
        \label{eq:boundness_phi'}
        \| \Phi'(u(\cdot, t)) \|_{L^{\infty}(0,1)} \le C(T) \quad \forall t \in [0, T].
    \)
    \begin{proof}
        Fix $\eps > 0$. Using \eqref{eq:ux_1} and the convexity of $\Phi$, we can recast the energy functional \eqref{eq:energy_functional1} as
        \begin{align}
            E[D(t)] &= \int_0^1 D u^2_x \Phi''(u) \d x \notag \\
            &= \int_0^1 |D u_x| | u_x \Phi''(u) | \d x \notag \\
            &= \int_0^1 |\alpha(t) - R(x) | \big| \Phi'(u)_x \big| \d x \notag \\
            &= \int_{|\alpha(t) - R(x)| \le \eps} |\alpha(t) - R(x) | \big|  \Phi'(u)_x \big| \d x + \int_{|\alpha(t) - R(x)| > \eps} |\alpha(t) - R(x) | \big| \Phi'(u)_x \big| \d x \notag \\
            &=: I_{\eps_-} + I_{\eps_+}. \notag
        \end{align}
    Since $D=D(t)$ represents a gradient flow of the energy $E$,  we have $E[D(t)] \le E[D_I] < \infty$ for all $t \ge 0$, and the equality holds if and only if $D \equiv D_I$. Therefore,
    \begin{align}
        I_{\eps_+} := \int_{|\alpha(t) - R(x)| > \eps} |\alpha(t) - R(x) | \big| \Phi'(u)_x \big| \d x < E[D_I], \notag
    \end{align}
    and, consequently,
    \begin{align}
        \label{eq:bound1_phi'}
        \int_{|\alpha(t) - R(x)|>\eps} \big| \Phi'(u)_x \big| \d x < \frac{E[D_I]}{\eps}.
    \end{align}
    Now, take $x \in \{ z \in [0, 1]; \, | \alpha(t) -R(z) | \le \eps \}$. Then, from \eqref{eq:Dt_1} we obtain the estimate
    \begin{align}
        \frac{1}{3} \big| (D^3)_t \big| &= \bigg| \left[ R(x) - \alpha(t) \right] \left( \int_0^x \Phi''\left( u \right) S(y) \d y - \beta(t) \right) \bigg| \notag \\
        &\le 2 \eps \| \Phi'' \|_{L^{\infty}(0,1)} \| S \|_{L^1(0,1)}. \notag
    \end{align}
    Integrating the latter expression in time between $(0, t)$ and choosing 
    \(
        \eps = \frac{7}{48} \frac{\gamma^3}{\| \Phi'' \|_{L^{\infty}(0,1)} \| S \|_{L^1(0,1)} T} \notag
    \)
    we get the following lower bound for $D(x,t)$
    \begin{align}
        D^3 (x, t) \ge D_I(x) - 6 \eps \| \Phi''(u) \|_{L^{\infty}(0,1)} \| S \|_{L^1(0,1)} t \notag \ge \frac{1}{8} \gamma^3  \notag
    \end{align}
    for all $(x,t) \in \{ z \in [0, 1]; \, | \alpha(t) - R(z) | \le \eps \} \times [0,T]$.
    Therefore, from the latter inequality and \eqref{eq:ux_1} we find the following bound for $u_x$,
    \begin{align}
        |u_x| \le \frac{2 \eps}{\gamma} \qquad \mbox{for all } (x,t) \in \{ z \in [0, 1]; \, | \alpha(t) - R(z) | \le \eps \} \times [0,T], \notag
    \end{align}
    and
    \begin{align}
        \label{eq:bound2_phi'}
        \int_{|\alpha(t) - R(x)| \le \eps} \big| \Phi'(u)_x \big| \d x \le \frac{2 \eps \| \Phi'' \|_{L^{\infty}(0,1)}}{\gamma} \qquad \mbox{for all } t \in [0, T].
    \end{align}
    A combination of \eqref{eq:bound1_phi'} and \eqref{eq:bound2_phi'} yields
    \begin{align}
        \int_0^1 \big| \Phi'(u)_x \big| \d x < \frac{E[D_I]}{\eps} + \frac{2 \eps \| \Phi'' \|_{L^{\infty}(0,1)}}{\gamma} \qquad \mbox{for all } t \in [0, T]. \notag
    \end{align}
    Finally, since $\Phi'\left( u(x,t) \right) = \Phi'(0) = 0$ for $x \in \{0, 1 \}$  we conclude that
    \begin{align}
        \| \Phi'\left( u(\cdot, t) \right) \|_{W^{1,1}(0,1)} \le \Tilde{C}(T) \qquad \mbox{for all } t\in [0, T], \notag
    \end{align}
    so that
    \begin{align}
        \| \Phi'\left( u(\cdot, t) \right) \|_{L^{\infty}(0,1)} \le C(T) \qquad \mbox{for all } t \in [0, T]. \notag
    \end{align}
    \end{proof}
\end{proposition}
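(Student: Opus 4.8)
The plan is to bound $\Phi'(u(\cdot,t))$ in $W^{1,1}(0,1)$ uniformly on $[0,T]$ and then to use the one-dimensional embedding $W^{1,1}(0,1)\hookrightarrow L^{\infty}(0,1)$. The reduction is clean because $\Phi'(u)$ vanishes on $\partial\Omega$: since $c=0$, combining \eqref{eq:equilibrium_state} with $u(0,t)=u(1,t)=0$ gives $\Phi'(u(x,t))=\int_0^x\Phi''(u)u_y\,\d y$ for every $x\in(0,1)$, so that $\|\Phi'(u(\cdot,t))\|_{L^{\infty}(0,1)}\le\int_0^1|\Phi''(u)u_x|\,\d x=\int_0^1|\Phi'(u)_x|\,\d x$. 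Everything therefore reduces to a time-uniform estimate of $\int_0^1|\Phi'(u)_x|\,\d x$ on $[0,T]$ that does \emph{not} involve $\|1/D(\cdot,t)\|_{L^1(0,1)}$ — this last point being precisely the gain over Proposition~\ref{proposition_ubounded}.

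The engine is the identity $Du_x=\alpha(t)-R(x)$ from \eqref{eq:ux_1}, which rewrites the energy density as $D\,\Phi''(u)u_x^2=(\alpha(t)-R(x))\,\Phi'(u)_x$; convexity of $\Phi$ makes this nonnegative, hence $E[D(t)]=\int_0^1|\alpha(t)-R(x)|\,|\Phi'(u)_x|\,\d x$. Since $D(t)$ is the gradient flow of $E$ and the hypotheses ($D_I\ge\gamma$, $\Phi''\in L^{\infty}$, $S\in L^1$, together with \eqref{eq:ux_1}) make $E[D_I]<\infty$, we have $E[D(t)]\le E[D_I]$ for all $t\ge0$. Fixing $\eps>0$, on the ``steep'' set $\{x:|\alpha(t)-R(x)|>\eps\}$ this immediately yields
\[
\int_{\{|\alpha(t)-R(x)|>\eps\}}|\Phi'(u)_x|\,\d x\;\le\;\frac{E[D_I]}{\eps}.
\]

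On the complementary ``flat'' set $\{x:|\alpha(t)-R(x)|\le\eps\}$ I would instead estimate $|\Phi'(u)_x|\le\|\Phi''\|_{L^{\infty}}|u_x|=\|\Phi''\|_{L^{\infty}}\,|\alpha(t)-R(x)|/D\le\|\Phi''\|_{L^{\infty}}\,\eps/D$, and then derive a lower bound on $D$ on that set depending only on $\gamma,T,\|S\|_{L^1},\|\Phi''\|_{L^{\infty}}$. For this I would recast \eqref{eq:Dt_1} as the scalar ODE $\tfrac13(D^3)_t=(R(x)-\alpha(t))\,G(x,t)$, where $G$ is a $1/D$-weighted average of terms $\int_y^x\Phi''(u)S$ and hence satisfies $|G|\le\|\Phi''\|_{L^{\infty}}\|S\|_{L^1}$. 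On the flat set the first factor is $\le\eps$ in modulus, so $|(D^3)_t|\lesssim\eps\,\|\Phi''\|_{L^{\infty}}\|S\|_{L^1}$; integrating in time over $[0,t]$ and choosing $\eps$ of order $\gamma^3/(\|\Phi''\|_{L^{\infty}}\|S\|_{L^1}T)$ keeps the accumulated change below $D_I^3\ge\gamma^3$, so $D^3(x,t)\ge\tfrac18\gamma^3$ and therefore $|u_x|\le 2\eps/\gamma$ there. Thus $\int_{\{|\alpha(t)-R(x)|\le\eps\}}|\Phi'(u)_x|\,\d x\le 2\eps\|\Phi''\|_{L^{\infty}}/\gamma$, and summing the two pieces gives $\int_0^1|\Phi'(u)_x|\,\d x\le E[D_I]/\eps+2\eps\|\Phi''\|_{L^{\infty}}/\gamma=:C(T)$, which by the first paragraph is the claim.

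The main obstacle is the lower bound on $D$ on the flat set. Because $\{x:|\alpha(t)-R(x)|\le\eps\}$ moves with $t$, deducing $D^3(x,t)\ge\tfrac18\gamma^3$ from the time-integral of $(D^3)_t$ requires controlling the integrand along the whole trajectory $s\in[0,t]$ and tuning $\eps$ against the horizon $T$; equally essential is that the constant produced this way be assembled only from $\gamma,T,\|S\|_{L^1},\|\Phi''\|_{L^{\infty}}$ and never from $\|1/D\|_{L^1}$, the possibly time-growing quantity one must avoid. The remaining ingredients — rewriting $E$, the dichotomy in $\eps$, and the final summation — are routine.
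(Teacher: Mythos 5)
Your proposal follows essentially the same route as the paper's proof: the same rewriting of $E[D(t)]$ as $\int_0^1|\alpha(t)-R(x)|\,|\Phi'(u)_x|\,\d x$, the same $\eps$-dichotomy with the gradient-flow bound $E[D(t)]\le E[D_I]$ on the steep set, the same ODE estimate $|(D^3)_t|\lesssim \eps\,\|\Phi''\|_{L^\infty}\|S\|_{L^1}$ with $\eps\sim\gamma^3/(\|\Phi''\|_{L^\infty}\|S\|_{L^1}T)$ giving $D^3\ge\gamma^3/8$ on the flat set, and the same $W^{1,1}\hookrightarrow L^\infty$ conclusion using $\Phi'(u)=0$ on the boundary. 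The one difficulty you single out — that the flat set moves with $t$, so the time integration of $(D^3)_t$ needs the pointwise bound along the whole trajectory — is present in, and not treated any more carefully by, the paper's own argument.
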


In particular, Proposition \ref{boundness_phi'} implies that, if $\Phi'' \in L^{\infty}(\R)$, then $\|u(\cdot, t) \|_{L^{\infty}(0, 1)}$ is locally bounded in $t$ if $\Phi'(\infty) = \infty$ and $\Phi'(-\infty) = -\infty$.

The following theorem provides existence and uniqueness of a solution of the integro-differential system (\ref{eq:u_1}--\ref{eq:Dt_1}).

\begin{theorem} \label{Theorem:existence_1D}
    The system (\ref{eq:u_1}--\ref{eq:Dt_1}) complemented with the initial condition \eqref{eq:initial_condition} has a unique positive solution $D \in C^1 \left( [0, T); C [0, 1] \right)$. The maximal time of existence $T>0$ is finite if and only if $\liminf_{t \rightarrow T^-} min_{x \in [0, 1]} D(x,t) = 0$, otherwise the solution is global with $T=+\infty$.
\end{theorem}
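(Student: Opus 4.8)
The plan is to regard the integro-differential system (\ref{eq:u_1}--\ref{eq:Dt_1}) as an autonomous Cauchy problem for $D$ in the Banach space $X=C[0,1]$ (if $D_I$ is merely in $L^\infty(0,1)$ one argues identically in $X=L^\infty(0,1)$) and to combine the Picard--Lindel\"of theorem with a continuation argument. Writing $u[D]$ for the function of $x$ given by \eqref{eq:u_1} and
\[
\mathcal G[D](x):=\frac{1}{D(x)^2}\left(\frac{\int_0^1\frac{\int_y^x S(z)\,\d z}{D(y)}\,\d y}{\int_0^1\frac{\d y}{D(y)}}\right)\left(\frac{\int_0^1\frac{\int_y^x\Phi''(u[D])\,S(z)\,\d z}{D(y)}\,\d y}{\int_0^1\frac{\d y}{D(y)}}\right),
\]
the system reads $D_t=\mathcal G[D]$, $D(0)=D_I$, posed on the open set $\mathcal U=\{D\in X:\min_{[0,1]}D>0\}$. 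The crux is to show that on every slab $\mathcal A_{\delta,M}=\{D\in X:\delta\le D\le M\}$, $0<\delta<M$, the map $\mathcal G$ is bounded and Lipschitz, with constants $B(\delta,M)$ and $L(\delta,M)$ depending only on $\delta$, $M$, $\|S\|_{L^1(0,1)}$ and $\Phi$; note $\mathcal G[D]\in C[0,1]$ for $D\in\mathcal A_{\delta,M}$ because $x\mapsto\int_0^x S$ and $x\mapsto\int_0^x\Phi''(u[D])\,S$ are continuous.

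\textbf{Step 1: local existence and uniqueness.} First I would note that $D\in\mathcal A_{\delta,M}$ gives $\|1/D\|_{L^\infty}\le1/\delta$ and $\int_0^1\d y/D\ge1/M$, whence by Proposition~\ref{proposition_ubounded} (or directly from \eqref{eq:u_1}) $\|u[D]\|_{L^\infty}\le\|S\|_{L^1}/\delta=:K$. Since $s\mapsto1/s$ is Lipschitz on $[\delta,M]$, the map $D\mapsto u[D]$ is Lipschitz from $\mathcal A_{\delta,M}$ to $C[0,1]$ (quotients of bounded Lipschitz maps with denominators bounded below by $1/M$ are Lipschitz); composing with $\Phi''$, which is Lipschitz on $[-K,K]$ provided $\Phi\in C^3$ — consistent with the appearance of $\Phi'''$ in \eqref{eq:sigma_law} — and then with the ($S$-linear, bounded) integrations and with $D\mapsto1/D^2$, one gets that $\mathcal G:\mathcal A_{\delta,M}\to C[0,1]$ is bounded and Lipschitz as claimed. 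Taking $\delta=\gamma/2$ and $M=2\|D_I\|_{L^\infty}$, the Picard--Lindel\"of theorem yields a unique $D\in C^1([0,\tau_0);C[0,1])$ solving $D_t=\mathcal G[D]$, $D(0)=D_I$, lying in $\mathcal A_{\gamma/2,2\|D_I\|_{L^\infty}}$ on an interval $[0,\tau_0)$ whose length is controlled only by $B,L$ at these parameters; a Gr\"onwall estimate from the Lipschitz bound gives uniqueness on any common interval of existence, hence a unique maximal solution on some $[0,T)$ with $D\in C^1([0,T);C[0,1])$, $D>0$.

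\textbf{Step 2: characterization of $T$.} It remains to prove the continuation criterion: if $T<\infty$ then $\liminf_{t\to T^-}\min_{[0,1]}D(\cdot,t)=0$. Suppose not; combining the negation with continuity of $t\mapsto\min_x D(x,t)$ on compact subintervals of $[0,T)$ gives $\delta_0>0$ with $\min_{[0,1]}D(\cdot,t)\ge\delta_0$ for all $t\in[0,T)$. Then $\|1/D(\cdot,t)\|_{L^1}\le1/\delta_0$, so Proposition~\ref{proposition_ubounded} yields $\|u(\cdot,t)\|_{L^\infty}\le\|S\|_{L^1}/\delta_0$ on $[0,T)$, and inserting the elementary bounds $|\int_y^x S|\le\|S\|_{L^1}$ and $|\int_y^x\Phi''(u)S|\le\|S\|_{L^1}\sup_{|s|\le\|S\|_{L^1}/\delta_0}|\Phi''(s)|$ into \eqref{eq:Dt_1} gives $|\partial_t(D^3)|=3|D^2D_t|\le C_0$ uniformly on $[0,1]\times[0,T)$. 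Integrating in time, $D(x,t)^3\le\|D_I\|_{L^\infty}^3+C_0T=:M_0^3$, so $D(\cdot,t)\in\mathcal A_{\delta_0,M_0}$ for all $t\in[0,T)$. On this fixed slab $\mathcal G$ is bounded by $B(\delta_0,M_0)$, hence $t\mapsto D(\cdot,t)$ is uniformly continuous on $[0,T)$ and converges in $C[0,1]$ to some $D^*\in\mathcal A_{\delta_0,M_0}\subset\mathcal U$; applying Step 1 with datum $D^*$ extends the solution past $T$, contradicting maximality. Thus $T<\infty$ forces $\liminf_{t\to T^-}\min_x D(x,t)=0$, while $T=\infty$ means the solution is global — exactly the stated dichotomy; positivity on $[0,T)$ is automatic since leaving $\mathcal U$ is precisely $\min_x D(x,t)\to0$.

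\textbf{Main obstacle.} The estimates in Step 1 are routine but need care, because every constant for $\mathcal G$ degenerates as $\delta\downarrow0$ — through the factors $1/\delta$, $1/\delta^2$ and through $\sup_{|s|\le\|S\|_{L^1}/\delta}|\Phi''(s)|$ — so the local existence time genuinely shrinks near $\partial\mathcal U$. The decisive structural point, used only in Step 2, is that a uniform positive lower bound on $D$ upgrades, via Proposition~\ref{proposition_ubounded} and \eqref{eq:Dt_1}, to an a priori $L^\infty$ upper bound on $D$; this prevents norm blow-up and leaves $\min_x D(x,t)\to0$ as the only possible finite-time singularity. (Here the only property of $\Phi$ needed beyond Step 1 is local boundedness of $\Phi''$, i.e.\ $\Phi\in C^2$.)
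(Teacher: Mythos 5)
Your proposal is correct and follows essentially the same route as the paper, which merely sketches the argument: the Cauchy--Lipschitz--Picard theorem (Picard iteration/Banach contraction) in $C[0,1]$ on a time interval where $D$ stays bounded below, with Proposition~\ref{proposition_ubounded} supplying the bound on $u$ in terms of $\min_x D$ alone. Your Step 2 — upgrading a uniform lower bound on $D$ to an a priori upper bound via \eqref{eq:Dt_1} so that only $\min_x D \to 0$ can obstruct continuation — is a careful elaboration of what the paper leaves implicit, and is sound.
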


The proof of the theorem is a direct consequence of the Cauchy-Lipschitz-Picard theorem. First, a bound for $u$ depends only on the minimum of $D$ from Proposition \ref{proposition_ubounded}. Employing Picard iterations (or Banach contraction) in the space of continuous functions for a sufficiently small time $T$, such that $D$ remains bounded from below by a positive constant, which implies existence of a local solution. The solution is global in time if $D=D(x,t)$ remains positive for all $t\geq 0$.

To facilitate computations and gain deeper insight into the system, we now replace the homogeneous Dirichlet boundary conditions with mixed Neumann-Dirichlet,
\begin{align}
    u_x (t, 0) = 0, &\quad u(t, 1) = 0, \notag \\
    \sigma_x(t, 0) = 0, &\quad \sigma(t, 1) = 0. \notag
\end{align}
This modification does not alter neither the gradient flow computations nor Proposition \ref{proposition_ubounded} and Theorem \ref{Theorem:existence_1D}. Moreover, the resulting integro-differential system is a straightforward modification of \eqref{eq:Dt_1}, \eqref{eq:u_1}:
\begin{subequations} \label{eq:system1D}
\begin{numcases}{}
    D^2 D_t = R(x) \int_0^x \Phi''(u) S(y) \d y,  \label{eq:Dt_2} \\
    u = \int_x^1 \frac{R(y)}{D} \d y \label{eq:u_2},
\end{numcases}
\end{subequations}
complemented with the initial condition \eqref{eq:initial_condition}, and the energy functional \eqref{eq:energy_functional1} can be written as
\(  
    \label{eq:energy_functional_1D}
    E[D] = \int_0^1 \frac{R(x)^2}{D} \Phi''(u) \d x.
\)

We now focus on identifying conditions under which the diffusivity $D(x,t)$ remains positive for all $t>0$,
implying global well-posedness of the problem by Theorem \ref{Theorem:existence_1D}. The following proposition provides some examples.

\begin{proposition} \label{proposition_D>0}
    Let $D$ be the unique solution of \eqref{eq:Dt_2}, \eqref{eq:u_2} with the initial condition \eqref{eq:initial_condition}. If any of the following conditions hold:
    \begin{enumerate}
        \item \label{sign_S} $S(x) \ge 0$ or $S(x) \le 0$ almost everywhere on $(0, 1)$,
        \item \label{sign_R} $R(x) \ge 0$ for $x \in [0, 1]$ and $\Phi'''(u) \ge 0$, or $ R(x) \le 0$ for $x \in [0, 1]$ and $\Phi'''(u) \le 0 $.
        \item 
        Define $S^+(x):=\max\{S(x),0\}$ and, resp., $S^-(x):=\max\{-S(x),0\}$ the positive and, resp., negative parts of $S=S(x)$.
        If there exists constants $K_+, K_- > 0$ such that $K_- \le \Phi''(u) \le K_+ \, \, \forall u \in \R$ and 
        \(
            \label{eq:condition1}
            \int_0^x S^+(y) \d y \ge \frac{K_+}{K_-} \int_0^x S^-(y) \d y, \quad \forall x \in (0,1)
        \)
        or
        \(
            \label{eq:condition2}
            \int_0^x S^+(y) \d y \le \frac{K_-}{K_+} \int_0^x S^-(y) \d y, \quad \forall x \in (0,1),
        \)
    \end{enumerate}
    then there exists a $\gamma > 0$ such that $D(x,t) \ge \gamma$ for all $(x,t) \in [0, 1] \times [0, \infty)$.

\begin{proof}
If \eqref{sign_S} holds, then trivially $D_t \ge 0$ and the result follows. \\
Assume now \eqref{sign_R}. Integrating \eqref{eq:Dt_2} by parts and using $u_x = - \frac{R(x)}{D}$, we obtain
\begin{align}
    D^2 D_t &= R(x) \int_0^x \Phi''(u) R'(y) \d y \notag \\
    &= R(x) \left( \Phi''(u) R(x) - \int_0^x \Phi'''(u) u_x R(y) \d y \right) \notag \\
    &= R^2(x) \Phi''(u) + R(x) \int_0^x \Phi'''(u) \frac{R(y)^2}{D} \d y \ge 0 \notag,
\end{align}
which yields the statement.
Finally, let \eqref{eq:condition1} hold. Then,
\begin{align}
    R(x) &= \int_0^x \big( S^+(y) - S^-(y) \big) \d y \notag \\
    &\ge \left( \frac{K_+}{K_-} - 1 \right) \int_0^x S^-(y) \d y \ge 0 \quad \forall x \in (0, 1), \notag
\end{align}
and, consequently
\begin{align}
    D^2 D_t \ge R(x) \int_0^x \big( K_- S^+(y) - K_+ S^-(y) \big) \d y \ge 0 \quad \forall x \in (0, 1), \notag
\end{align}
which proves the proposition. An analogous argument can be made in the case where \eqref{eq:condition2} holds.
\end{proof}
\end{proposition}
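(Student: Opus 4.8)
The plan is to show that under each of the three conditions the right-hand side of \eqref{eq:Dt_2} is nonnegative, so that $t \mapsto D(x,t)$ is non-decreasing for every fixed $x$; consequently $D(x,t) \ge D_I(x) \ge \gamma$ is preserved along the whole interval of existence, and Theorem \ref{Theorem:existence_1D} then upgrades this to global existence with $T = +\infty$. All manipulations take place on the maximal interval $[0,T)$ on which the solution from Theorem \ref{Theorem:existence_1D} remains positive, so that $u$, the identity $u_x = -R(x)/D$ coming from \eqref{eq:u_2}, and the integrals appearing below are all meaningful; convexity of $\Phi$ supplies $\Phi'' \ge 0$ on $\R$ throughout.

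For condition \eqref{sign_S}: if $S \ge 0$ a.e.\ then $R(x) = \int_0^x S \ge 0$ and $\int_0^x \Phi''(u) S(y)\,\d y \ge 0$, so the product $D^2 D_t$ is nonnegative; if $S \le 0$ a.e.\ both factors reverse sign and the product is again $\ge 0$. For condition \eqref{sign_R} I would write $\int_0^x \Phi''(u) S(y)\,\d y = \int_0^x \Phi''(u) R'(y)\,\d y$ and integrate by parts, using $R(0) = 0$ and substituting $u_x = -R/D$, to get $\int_0^x \Phi''(u) S\,\d y = \Phi''(u(x)) R(x) + \int_0^x \Phi'''(u)\,R(y)^2/D\,\d y$; hence $D^2 D_t = R(x)^2\,\Phi''(u(x)) + R(x)\int_0^x \Phi'''(u)\,R(y)^2/D\,\d y$, where the first term is $\ge 0$ by convexity and the second carries the sign of $R(x)\,\Phi'''(u)$, which is $\ge 0$ in either of the two posited sub-cases.

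For the third condition, assume $K_- \le \Phi'' \le K_+$. Under \eqref{eq:condition1}, since $K_+ \ge K_-$ we have $R(x) = \int_0^x S^+ - \int_0^x S^- \ge (K_+/K_- - 1)\int_0^x S^- \ge 0$; splitting $S = S^+ - S^-$ then gives $\int_0^x \Phi''(u) S\,\d y \ge K_-\int_0^x S^+ - K_+\int_0^x S^- \ge 0$ by \eqref{eq:condition1}, so $D^2 D_t \ge 0$. Under \eqref{eq:condition2} the mirror estimates yield $R(x) \le 0$ and $\int_0^x \Phi''(u) S\,\d y \le 0$, whose product is again $\ge 0$. In every case $D_t \ge 0$ on $[0,T)$, so $\min_{x\in[0,1]} D(x,t) \ge \gamma$; thus $\liminf_{t\to T^-}\min_{x\in[0,1]} D(x,t) \ge \gamma > 0$, and Theorem \ref{Theorem:existence_1D} forces $T = +\infty$, which is the assertion.

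The only step with genuine content is the integration by parts in case \eqref{sign_R}: one has to confirm that the boundary contribution at $y=0$ drops out (it does, as $R(0)=0$) and that the substitution $u_x = -R/D$ is admissible — which holds precisely because $D > 0$ on $[0,T)$. Everything else is a sign count, and the promotion to global existence is an immediate invocation of Theorem \ref{Theorem:existence_1D} with the same $\gamma$ as in \eqref{eq:initial_condition}.
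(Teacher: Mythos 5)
Your proposal is correct and follows essentially the same route as the paper: the same sign analysis of the two factors in case (1), the same integration by parts with $u_x=-R/D$ in case (2), and the same two-step estimate ($R\ge 0$ first, then the $K_\pm$ bound on $\int_0^x\Phi''(u)S$) in case (3). Your explicit bootstrap via Theorem \ref{Theorem:existence_1D} on the maximal interval $[0,T)$ only spells out what the paper leaves implicit.
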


Let us remark that by a simple modification of the proof of Proposition \ref{boundness_phi'},
one obtains $\| u(\cdot, t) \|_{L^{\infty}(0, 1)}$ locally bounded in $t$ also for the case of mixed boundary conditions, whenever $\Phi'' \in L^{\infty}(\R)$ and $\Phi'(\infty) = \infty$, $\Phi'(-\infty) = -\infty$.

To examine the convexity properties of the functional \eqref{eq:energy_functional_1D} under the constraint \eqref{eq:system1D}, we need to calculate its second-order variation.

\begin{proposition} \label{proposition_secondvariation}
    The second-order variation of \eqref{eq:energy_functional_1D} coupled to \eqref{eq:system1D} reads
    \begin{align}
    \label{eq:second_variation1D}
        \frac{\delta^2 E[D^0]}{\delta D^2} (D^1, D^1) = \bigintssss_0^1 \frac{R^2(x)}{D^0} \bigg[ &2 \Phi''(u^0) \frac{(D^1)^2}{(D^0)^2} + 2 \Phi'''(u^0) \left( \frac{D^1}{D^0} \bigintssss_x^1 \frac{R(y) D^1}{(D^0)^2} \d y + \bigintssss_x^1 \frac{R(y) (D^1)^2}{(D^0)^3} \d y \right) \notag \\
        &+ \Phi^\textrm{\romannumeral 4}(u^0) \left( \bigintssss_x^1 \frac{R(y) D^1}{(D^0)^2} \d y \right)^2 \bigg] \d x. 
    \end{align}
    Moreover, if 
    \begin{enumerate}
        \item \label{convexity_hyp_1} $2 \Phi''(u) \Phi^\textrm{\romannumeral 4} (u) \ge \Phi'''(u)^2$ on $\R$,
        \item \label{convexity_hyp_2} $R(x) \ge 0$ on $(0, 1)$ and $\Phi'''(u) \ge 0$ on $\R$ or $R(x) \le 0$ on $(0, 1)$ and $\Phi'''(u) \le 0$ on $\R$,
    \end{enumerate}
    then $E \equiv E[D]$ is convex on $L^2_{\gamma} (0, 1) := \{ D \in L^2(0, 1) \, \mbox{ such that } \, D \ge \gamma \, \mathrm{a. e.} \, \mathrm{on} \, (0, 1)\}$ for any $\gamma > 0$.
    \begin{proof}
        Fix $\eps \in \R$. We expand $D = D^0 + \eps D^1 + O(\eps^2)$ with $D^0 \ge \gamma > 0$ $\mathrm{a.e.}$ on $(0, 1)$. Taylor expansion up to second order gives
        \begin{align}
            u[D^0 + \eps D^1] &= u[D^0] + \eps \frac{\delta u[D^0]}{\delta D}(D^1) + \frac{\eps^2}{2} \frac{\delta^2 u[D^0]}{\delta D}(D^1, D^1) + O(\eps^3) \\
            &=: u^0 + \eps u^1 + \frac{\eps^2}{2} u^2 + O(\eps^3).
        \end{align}
        Plugging the above expansion into \eqref{eq:conservation_law_1D}, we can collect the $O(1)$ terms
        \begin{align}
        \label{eq:conservation_law_1D_0}
            -\left( D^0 u^0_x \right)_x = S,
        \end{align}
        the first-order terms
        \begin{align}
            -\left( D^0 u^1_x + D^1 u^0_x \right)_x = 0 \notag
        \end{align}        
        and the second-order ones
        \begin{align}
            -\left( \frac{1}{2} D^0 u^2_x + D^1 u^1_x \right)_x = 0. \notag
        \end{align}
        Integration in $x$ of the latter one--dimensional conservation laws  for $u_0, u_1, u_2$ leads to the explicit expressions
        \begin{subequations} \label{eq:u0u1u2}
        \begin{align}
            u^0_x = - \frac{R(x)}{D^0}, &\quad u^0 = \bigintssss_x^1 \frac{R(y)}{D^0} \d y \label{eq:u0} \\
            u^1_x = \frac{R(x) D^1}{(D^0)^2}, &\quad u^1 = - \bigintssss_x^1 \frac{R(y) D^1}{(D^0)^2} \d y \label{eq:u1} \\
            u^2_x = -2 \frac{R(x) (D^1)^2}{(D^0)^3}, &\quad u^2 = \bigintssss_x^1 2 \frac{R(y) (D^1)^2}{(D^0)^3} \d y. \label{eq:u2}
        \end{align}
        \end{subequations}
        Multiplication of \eqref{eq:conservation_law_1D} by $\Phi'(u)$ and integration by parts allow us to recast the energy functional \eqref{eq:energy_functional_1D} as
        \begin{align*}
            E[D] = \int_0^1 S(x) \Phi'(u) \d x.
        \end{align*}
        Thus, we have for the second variation of the functional
        \begin{align*}
            \frac{\delta^2 E[D^0]}{\delta D^2} (D^1, D^1) &= \frac{d^2}{d \eps^2} \int_0^1 S(x) \Phi'\left( u^0 + \eps u^1 + \frac{\eps^2}{2} u^2 \right) \d x \bigg|_{\eps=0} \notag \\
            &= \frac{d^2}{d \eps^2} \int_0^1 S(x) \bigg[ \Phi'(u^0) + \eps \Phi''(u^0) u^1 + \frac{\eps^2}{2}\left( \Phi''(u^0) u^2 + \Phi'''(u^0) (u^1)^2 \right) \bigg] \d x \bigg|_{\eps=0} \\
            &= \int_0^1 S(x) \left( \Phi''(u^0) u^2 + \Phi'''(u^0) (u^1)^2 \right) \d x.
        \end{align*}
        Using \eqref{eq:conservation_law_1D_0}, integration by parts and substitution of \eqref{eq:u0u1u2} results in
        \begin{align*}
            \frac{\delta^2 E[D^0]}{\delta D^2} (D^1, D^1) = \bigintssss_0^1 \frac{R^2(x)}{D^0} \Bigg[ &2 \Phi''(u^0) \frac{(D^1)^2}{(D^0)^2} + 2 \Phi'''(u^0) \left( \frac{D^1}{D^0} \bigintssss_x^1 \frac{R(y) D^1}{(D^0)^2} \d y + \bigintssss_x^1 \frac{R(y) (D^1)^2}{(D^0)^3} \d y \right) \notag \\
            &+ \Phi^\textrm{\romannumeral 4}(u^0) \left( \bigintssss_x^1 \frac{R(y) D^1}{(D^0)^2} \d y \right)^2 \Bigg] \d x, 
        \end{align*}
        which proves \eqref{eq:second_variation1D}.

        Observe that if \eqref{convexity_hyp_2} holds, we have
        \begin{align*}
            \frac{\delta^2 E[D^0]}{\delta D^2} (D^1, D^1) \ge \bigintssss_0^1 \frac{R^2(x)}{D^0} \Bigg[ &2 \Phi''(u^0) \frac{(D^1)^2}{(D^0)^2} + 2 \Phi'''(u^0) \frac{D^1}{D^0} \bigintssss_x^1 \frac{R(y) D^1}{(D^0)^2} \d y \notag \\
            &+ \Phi^\textrm{\romannumeral 4}(u^0) \left( \bigintssss_x^1 \frac{R(y) D^1}{(D^0)^2} \d y \right)^2 \Bigg] \d x.
        \end{align*}
        Defining
        \begin{align*}
            v = \begin{bmatrix}
                \frac{D^1}{D^0} & \bigintssss_x^1 \frac{R(y) D^1}{(D^0)^2} \d y 
                \end{bmatrix}, \quad
                Q = \begin{bmatrix}
                    2 \Phi''(u^0) & \Phi'''(u^0) \\
                    \Phi'''(u^0) & \Phi^\textrm{\romannumeral 4}(u^0)
                \end{bmatrix},
        \end{align*}
        we write
        \begin{align*}
            \frac{\delta^2 E[D^0]}{\delta D^2} (D^1, D^1) \ge \bigintssss_0^1 \frac{R^2(x)}{D^0} v \cdot Q v \d x.
        \end{align*}
        Finally, if \eqref{convexity_hyp_1} holds then $\det Q \ge 0$ and $E[D]$ is convex on $L^2_{\gamma}(0, 1)$ for every $\gamma > 0$.
    \end{proof}
\end{proposition}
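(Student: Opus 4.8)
The plan is to carry out a standard second-variation computation, the main simplification being to first rewrite the energy so that all dependence on $D$ is hidden inside the constrained state $u = u[D]$. First I would perturb $D = D^0 + \eps D^1$ with $D^0 \ge \gamma > 0$ a.e. and expand $u[D^0 + \eps D^1] = u^0 + \eps u^1 + \tfrac{\eps^2}{2} u^2 + O(\eps^3)$. Substituting this into the constraint $-(D u_x)_x = S$ and collecting powers of $\eps$ produces linear ordinary differential equations for $u^0_x$, $u^1_x$, $u^2_x$; integrating them in $x$ and using the mixed Neumann--Dirichlet boundary conditions (so that the integration constants vanish, since $R(0)=0$, $u^k_x(t,0)=0$ and $u^k(t,1)=0$) yields the closed forms for $u^0, u^1, u^2$ recorded in \eqref{eq:u0u1u2}. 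These integrations are elementary.

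The second step is to observe that multiplying $-(D u_x)_x = S$ by $\Phi'(u)$ and integrating by parts gives $E[D] = \int_0^1 S(x)\,\Phi'(u)\,\d x$, the boundary contributions dropping out because $\Phi'(u)=\Phi'(0)=0$ at $x=1$ and $u_x=0$ at $x=0$, together with $Du_x^2\Phi''(u) = Du_x(\Phi'(u))_x$ and $Du_x^2 = R^2/D$. Differentiating this expression twice in $\eps$ at $\eps=0$, using $\tfrac{d^2}{d\eps^2}\Phi'(u) = \Phi''(u)u_{\eps\eps} + \Phi'''(u)(u_\eps)^2$ and $u_\eps|_{\eps=0}=u^1$, $u_{\eps\eps}|_{\eps=0}=u^2$, one gets $\tfrac{\delta^2 E[D^0]}{\delta D^2}(D^1,D^1) = \int_0^1 S(x)\big(\Phi''(u^0)u^2 + \Phi'''(u^0)(u^1)^2\big)\,\d x$. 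Then I would substitute $S = -(D^0 u^0_x)_x$, integrate by parts in $x$, and insert the explicit expressions \eqref{eq:u0u1u2} for $u^0, u^1, u^2$; after swapping orders of integration in the resulting double integrals this should collapse to the claimed identity \eqref{eq:second_variation1D}.

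For the convexity assertion, under hypothesis \eqref{convexity_hyp_2} the term $2\Phi'''(u^0)\int_x^1 \frac{R(y)(D^1)^2}{(D^0)^3}\,\d y$ carries a sign agreeing with the prefactor $R^2(x)/D^0 \ge 0$ (e.g. if $R\ge0$ and $\Phi'''\ge0$ the integrand is pointwise nonnegative), so it may be discarded to obtain a lower bound for the second variation. The remaining integrand is the quadratic form $\frac{R^2(x)}{D^0}\, v\cdot Q\,v$ with $v = \big(\tfrac{D^1}{D^0},\ \int_x^1 \tfrac{R(y)D^1}{(D^0)^2}\,\d y\big)$ and $Q = \begin{bmatrix} 2\Phi''(u^0) & \Phi'''(u^0) \\ \Phi'''(u^0) & \Phi^{\textrm{\romannumeral 4}}(u^0)\end{bmatrix}$. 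Convexity of $\Phi$ gives $\Phi'' \ge 0$, and hypothesis \eqref{convexity_hyp_1} gives $\det Q \ge 0$, so $Q$ is positive semidefinite and the integral is nonnegative; since this holds for every admissible perturbation $D^1$, $E$ is convex on $L^2_\gamma(0,1)$.

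The main obstacle I anticipate is the bookkeeping in the integration-by-parts step that produces \eqref{eq:second_variation1D}: because $u^1$ and $u^2$ are themselves integrals over $(x,1)$, one must interchange orders of integration and integrate by parts in $x$ several times while checking that every boundary term vanishes, and it is precisely in tracking the factors of $2$ and the correct placement of $\Phi'''(u^0)$ versus $\Phi^{\textrm{\romannumeral 4}}(u^0)$ in the cross-terms that errors are most likely to arise.
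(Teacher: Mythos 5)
Your proposal is correct and follows essentially the same route as the paper's proof: the same $\eps$-expansion of $u[D]$ through the constraint, the same rewriting $E[D]=\int_0^1 S\,\Phi'(u)\,\d x$, the same substitution of $S=-(D^0u^0_x)_x$ with integration by parts, and the same discard-the-signed-term-then-quadratic-form argument with the matrix $Q$ for convexity. Your remark that positive semidefiniteness of $Q$ also needs $\Phi''\ge 0$ (from convexity of $\Phi$) is a small point the paper leaves implicit, but otherwise the two arguments coincide.
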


When considering the case of $\Phi(u) = u^p$, the energy functional is convex if $1 \le p \le 2$ and $R \le 0$, or if $p \ge 4$ and $R \ge 0$. Condition \eqref{convexity_hyp_1} is reminiscent, but not identical, of the one in \cite{AMT}. We emphasize that Condition \ref{convexity_hyp_2} satisfies the hypothesis of Proposition \ref{proposition_D>0}, ensuring that $D$ never reaches the boundary of $L^2_{\gamma}(0, 1)$ since $D_t \geq 0$.

\section{A remarkable example}

Based on Proposition \ref{proposition_D>0}, we have identified certain cases where the diffusion $D$ remains positive over the entire domain so that the problem is well-posed thanks to Theorem \ref{Theorem:existence_1D}. Nevertheless, it is important to note that these cases represent only a partial enumeration of the possible scenarios, and there may exist other cases where D remains positive throughout the domain or it becomes zero somewhere. By systematically exploring the range of possible scenarios, we can gain a deeper understanding of the behavior of $D$ and its implications for the system under study. This involves analyzing the system subject to different distributions of sources and sinks of $S(x)$. With this in mind, let us define
\(
    \label{eq:S_delta}
    S(x) := a \delta(x-x_0) - b \delta (x-x_1), \quad a,b > 0, \, \, 0 < x_0 < x_1 < 1,
\)
where $\delta(x - \bar{x})$ is the Dirac delta distribution centered at $\bar{x}$. Then
\(
    \label{eq:R_delta}
    R(x) = a \chi_{[x_0, x_1]}(x) + (a - b)  \chi_{[x_1, 1]}(x) \quad \forall x \in [0, 1].
\)
Substituting \eqref{eq:S_delta}, \eqref{eq:R_delta} into \eqref{eq:Dt_2}, we can recast the evolution equation for $D$ as
\begin{align}
    D^2 D_t &= \left[ a \chi_{[x_0, x_1]}(x) + (a - b)  \chi_{[x_1, 1]}(x) \right] \int_0^x \Phi''(u) \big( a \delta(y-x_0) - b \delta (y-x_1) \big) \d y, \notag
\end{align}
i.e.,
\begin{eqnarray} \label{eq:Dt_detailed_delta_1D}
    D^2 D_t = \left\{
\begin{array}{ll} \displaystyle
 0 &  x \in [0, x_0), \\
 a^2 \, \Phi'' ( u(x_0,t)) &  x \in [x_0, x_1), \label{eq:Dt_x0} \\
 (a - b) \big[ a \Phi'' (u(x_0,t)) - b \Phi''(u(x_1, t)) \big] & x \in [x_1, 1]. \label{eq:Dt_x1}
\end{array}
\right.
\end{eqnarray}

For the sake of simplicity of presentation we choose a piecewise constant initial datum:
\begin{eqnarray} 
    {D(0, x) =} \left\{
\begin{array}{ll} \displaystyle
 D_{I,0} & x \in [0, x_0), \\
 D_{I, 1} & x \in [x_0, x_1), \\
 D_{I, 2} & x \in [x_1, 1],
\end{array}
\right.
\end{eqnarray}

with $D_{I,0},D_{I,1},D_{I,2}>0$.
We immediately notice that in the subintervals $[0, x_0)$, $[x_0, x_1)$ and $[x_1, 1]$ the evolution of $D$ does not depend on $x$. Hence
\begin{eqnarray} 
    {D(t,x) =} \left\{
\begin{array}{ll} \displaystyle
 D_{I,0} & x \in [0, x_0), \\
 D_{1}(t) & x \in [x_0, x_1), \\
 D_{2}(t) & x \in [x_1, 1],
\end{array}
\right.
\end{eqnarray}
where $D_1(t) \ge \gamma > 0$  $\forall t \in [0, \infty)$ from \eqref{eq:Dt_x0}. Consequently, we rewrite \eqref{eq:u_2} as
\begin{eqnarray} 
    {u =} \left\{
\begin{array}{ll} \displaystyle
\frac{a(x_1 - x_0)}{D_1(t)} + \frac{(a -b) (1-x_1)}{D_2(t)}, & x \in [0, x_0), \\ \displaystyle
\frac{a(x_1 - x)}{D_1(t)} + \frac{(a-b) (1-x_1)}{D_2(t)} & x \in [x_0, x_1),
    \\ \displaystyle
\frac{(a-b) (1-x)}{D_2(t)} & x \in [x_1, 1],    
\end{array}
\right.
\end{eqnarray}
where we are interested in 
\begin{subequations} \label{eq:u_delta}
\begin{align}
    \label{eq:u_x0_delta} u(x_0, t) &= \frac{a(x_1 - x_0)}{D_1(t)} + \frac{(a-b)(1-x_1)}{D_2(t)}, \\
    \label{eq:u_x1_delta} u(x_1, t) &= \frac{(a-b) (1-x_1)}{D_2(t)}, 
\end{align}
\end{subequations}
and we notice that $u(x_0, t) > u(x_1, t)$.

As we previously noted, existence and uniqueness of solutions is guaranteed whenever the function $D_2$ remains positive. Therefore, it is crucial to investigate conditions that lead to this result. To achieve that, we will leverage the properties of the gradient flow structure of our system.

\begin{proposition} \label{proposition_energy_delta}
    Let $D$ be the solution of \eqref{eq:Dt_detailed_delta_1D} with associated uniformly positive initial condition.
    
    In the case $a > b$ we have the following results.
    \begin{enumerate}
        \item \label{condition1.1} If $\int_0^{\infty} \Phi''(u) \d u \equiv \lim_{u \rightarrow \infty} \Phi'(u) = \infty$, then $D(x,t)>0$ $\forall (x, t) \in [0, 1] \times [0, \infty)$.
        \item \label{condition1.2} If $\int_0^{\infty} \Phi''(u) \d u \equiv \lim_{u \rightarrow \infty} \Phi'(u) =: \alpha < \infty$ and
        \(  
            \label{eq:condition_alpha}
            \alpha \ge \frac{E[D_I]}{(a-b)},
        \)
        then $D(x, t)>0$ $\forall (x, t) \in [0, 1] \times [0, \infty)$.
    \end{enumerate}
    We have the following results in the complementary case $a < b$.
    \begin{enumerate}
    \setcounter{enumi}{2}
        \item \label{condition2.1} If $\int_{-\infty}^0 \Phi''(u) \d u \equiv - \lim_{u \rightarrow -\infty} \Phi'(u) = \infty$, then $D(x, t)>0$ $\forall (x, t) \in [0, 1] \times [0, \infty)$.
        \item \label{condition2.2} If $\int_{-\infty}^0 \Phi''(u) \d u \equiv - \lim_{u \rightarrow -\infty} \Phi'(u) =: \beta < \infty$ and
        \(  
            \label{eq:condition_beta}
            \beta \ge \frac{E[D_I]}{(b-a)},
        \)
        then $D(x,t)>0$ $\forall (x, t) \in [0, 1] \times [0, \infty)$.
    \end{enumerate}
    
\end{proposition}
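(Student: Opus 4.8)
The engine of the proof is the energy identity along the flow: exactly as in the proof of Proposition~\ref{proposition_secondvariation}, multiplying \eqref{eq:conservation_law_1D} by $\Phi'(u)$ and integrating by parts (the boundary terms vanish because $\Phi'(0)=0$ and $u_x(t,0)=0$) rewrites \eqref{eq:energy_functional_1D} as
\[
E[D(t)] \;=\; \int_0^1 S(x)\,\Phi'(u)\d x \;=\; a\,\Phi'\big(u(x_0,t)\big) - b\,\Phi'\big(u(x_1,t)\big),
\]
where we used \eqref{eq:S_delta}. Recall that $E$ is non-negative (the integrand in \eqref{eq:energy_functional_1D} is $\ge 0$ by convexity of $\Phi$), non-increasing in $t$ along the gradient flow, and that $E[D(t)]=E[D_I]$ for some $t$ if and only if $D(\cdot,t)\equiv D_I$. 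I would first collect, from \eqref{eq:Dt_detailed_delta_1D} and \eqref{eq:u_delta}, three elementary facts: on $[0,x_0)$ the solution stays equal to $D_{I,0}$; on $[x_0,x_1)$ one has $D^2D_t=a^2\Phi''(u(x_0,t))\ge 0$, so $D_1(t)\ge D_{I,1}>0$ for all $t$; and by \eqref{eq:u_x0_delta}--\eqref{eq:u_x1_delta},
\[
u(x_0,t) \;=\; u(x_1,t) + \frac{a(x_1-x_0)}{D_1(t)},\qquad u(x_1,t) \;=\; \frac{(a-b)(1-x_1)}{D_2(t)},
\]
so that $0\le u(x_0,t)-u(x_1,t)\le a(x_1-x_0)/D_{I,1}$ uniformly in $t$, while $u(x_1,t)$ — and hence also $u(x_0,t)$ — tends to $+\infty$ (when $a>b$) or to $-\infty$ (when $a<b$) exactly when $D_2(t)\to 0$. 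By Theorem~\ref{Theorem:existence_1D}, the claim $D>0$ on $[0,1]\times[0,\infty)$ is therefore equivalent to $D_2$ being bounded away from zero.

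Assume, for contradiction, a sequence $t_n$ in the maximal existence interval with $D_2(t_n)\to 0$ (if the maximal time $T$ is finite this is guaranteed by Theorem~\ref{Theorem:existence_1D}, since $D_{I,0}$ and $D_1$ are bounded below; if $T=\infty$ it is precisely the negation of the claim). Consider first $a>b$, so $u(x_1,t_n)\to+\infty$ and $u(x_0,t_n)\to+\infty$, and put $\alpha:=\lim_{u\to+\infty}\Phi'(u)\in[0,+\infty]$. If $\alpha=+\infty$ (case~\ref{condition1.1}), then since $\Phi'$ is non-decreasing, $u(x_0,t_n)\ge u(x_1,t_n)$ and $a>b$,
\[
E[D(t_n)] \;=\; a\,\Phi'\big(u(x_0,t_n)\big) - b\,\Phi'\big(u(x_1,t_n)\big) \;\ge\; (a-b)\,\Phi'\big(u(x_1,t_n)\big) \;\longrightarrow\; +\infty,
\]
contradicting $E[D(t_n)]\le E[D_I]<\infty$. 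If $\alpha<\infty$ (case~\ref{condition1.2}), then $\Phi'(u(x_0,t_n))\to\alpha$ and $\Phi'(u(x_1,t_n))\to\alpha$ (using the uniform bound on $u(x_0,t)-u(x_1,t)$ and a squeeze), so $E[D(t_n)]\to(a-b)\alpha$; since $E$ is non-increasing and bounded below it has a limit $E_\infty$ as $t\to T$, necessarily $E_\infty=(a-b)\alpha$, and $E_\infty\le E[D(t)]\le E[D_I]$ for all $t$. Hence $(a-b)\alpha\le E[D_I]$, which together with \eqref{eq:condition_alpha} forces $(a-b)\alpha=E[D_I]$, so $E[D(t)]\equiv E[D_I]$ and therefore $D(\cdot,t)\equiv D_I$ by strict dissipation — impossible, since then $D_2(t_n)=D_{I,2}>0$ for every $n$ whereas $D_2(t_n)\to 0$. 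The case $a<b$ is identical after replacing $+\infty$ with $-\infty$, $\alpha$ with $\beta:=-\lim_{u\to-\infty}\Phi'(u)\in[0,+\infty]$ and $a-b$ with $b-a$, and using \eqref{eq:condition_beta} in case~\ref{condition2.2}.

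Consequently $D_2(t)\ge\gamma'$ for some $\gamma'>0$; combined with the first two facts above this gives $D(x,t)\ge\min\{D_{I,0},D_{I,1},\gamma'\}>0$ on $[0,1]\times[0,\infty)$, and Theorem~\ref{Theorem:existence_1D} promotes the solution to a global one. I expect the genuine difficulty to be the borderline instance of cases~\ref{condition1.2} and~\ref{condition2.2} in which equality holds in \eqref{eq:condition_alpha} (resp.\ \eqref{eq:condition_beta}): there the crude estimate $(a-b)\,\Phi'(u(x_1,t))\le E[D(t)]\le E[D_I]$ leaves no slack, and one must exploit the exact limiting value $E[D(t_n)]\to(a-b)\alpha$ — which rests on controlling $u(x_0,t)$ and $u(x_1,t)$ simultaneously via their bounded difference — together with the \emph{strict} decay of $E$ away from $D_I$. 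A degenerate subcase, where $\Phi''$ vanishes on the range attained by $u$ so that $D_t\equiv 0$, is trivial and should be addressed separately.
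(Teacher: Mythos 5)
Your proposal is correct and follows essentially the same route as the paper: it exploits the monotone decay of the energy along the gradient flow, writes $E[D(t)]=a\,\Phi'(u(x_0,t))-b\,\Phi'(u(x_1,t))$ (which is exactly the paper's $I_1(t)+I_2(t)$ after its change of variables, obtained here instead by testing the conservation law with $\Phi'(u)$), and derives a contradiction from $D_2\to 0$ in each of the four cases. The only difference is presentational: you treat the borderline equality in \eqref{eq:condition_alpha}--\eqref{eq:condition_beta} explicitly via the strict-dissipation dichotomy, whereas the paper disposes of it at the outset with its ``without loss of generality $E[D(t)]<E[D_I]$'' reduction.
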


\begin{proof}
    From the gradient flow theory,  it follows that $E[D(t)] \le E[D_I] < \infty, \, \, \forall t \ge 0$ and the equality holds if and only if $D = D_I \, \, \forall t \ge 0$. Therefore, without loss of generality, we assume $E[D(t)] < E[D_I]$. Further, we can expand the energy functional \eqref{eq:energy_functional_1D} as
    \begin{align}
        E[D(t)] &= \int_{x_0}^{x_1} \frac{a^2}{D_1(t)} \Phi''(u) \d x + \int_{x_1}^1 \frac{(a-b)^2}{D_2(t)} \Phi''(u) \d x \notag \\
        &= \int_{x_0}^{x_1} \frac{a^2}{D_1(t)} \Phi''\left(\frac{a (x_1 - x)}{D_1(t)} + \frac{(a-b) (1 - x_1)}{D_2(t)}\right) \d x + \int_{x_1}^1 \frac{(a-b)^2}{D_2(t)} \Phi''\left(\frac{(a-b) (1 - x)}{D_2(t)}\right) \d x \notag \\
        &=: I_1(t) + I_2(t). \notag
    \end{align}
    Through a change of variable, it is possible to recast $I_1(t), I_2(t)$ as
    \begin{align} \displaystyle
        I_1(t) = a \int_{\frac{(a-b)(1-x_1)}{D_2(t)}}^{\frac{a (x_1 - x_0)}{D_1(t)} + \frac{(a-b)(1-x_1)}{D_2(t)}} \Phi''(y) \d y, \notag
    \end{align}
    and
    \begin{align} \displaystyle
        I_2(t) = (a-b) \int_0^{\frac{(a-b) (1-x_1)}{D_2(t)}} \Phi''(y) \d y. \notag
    \end{align}
    Assume the proposition is false, i.e., $\exists \, 0 < T < \infty$ such that $D_2(T) = 0$. \\
    Let $a > b$. \eqref{condition1.1} is easily proved since
    \(
        E[D(t)] = \infty < E[D_I], \notag
    \)
which is a contradiction. To prove \eqref{condition1.2}, we first notice that from hypothesis $\Phi'' \in L^1 (0, \infty)$, so that $I_1 (T) = 0$. Then
    \begin{align}
        (a-b) \alpha = E[D(t)] < E[D_I] \notag
    \end{align}
    leads to a contradiction of \eqref{eq:condition_alpha}. \\
    By applying analogous reasoning in the case where $a<b$, we arrive at a series of contradictions that establish the validity of both \eqref{condition2.1} and \eqref{condition2.2}.
\end{proof}

We observe that in the case $a > b$, we have
\(
    E[D(t)] = a \int_0^{\frac{a(x_1 - x_0)}{D_1(t)} + \frac{(a-b)(1- x_1)}{D_2(t)}} \Phi''(y) \d y - b \int_0^{ \frac{(a-b)(1- x_1)}{D_2(t)}} \Phi''(y) \d y < a \alpha \quad \forall t \ge 0. \notag
\)
Hence, proposition \ref{proposition_energy_delta} does not cover the cases
\(
\label{eq_condition_energy_ini}
    (a - b) \alpha < E[D_I] < a \alpha,
\)
for which we currently do not have results. On the other hand, whenever $a < b$, we write
\(
    E[D(t)] = (b - a) \int_{ - \frac{(b-a)(1-x_1)}{D_2(t)}}^{0} \Phi''(y) \d y + a \int_{- \frac{(b-a)(1-x_1)}{D_2(t)}}^{\frac{a (x_1 - x_0)}{D_1(t)} - \frac{(b-a)(1-x_1)}{D_2(t)}} < b \beta + a \alpha, \notag
\)
which leaves open the case
\(
    (b - a) \beta < E[D_I] < b \beta + a \alpha. \notag
\)

\begin{remark}
    The work \cite{portaro2023} highlights an important convex entropy generating function $\Phi(u) = (u+1) \left( \ln (u+1) - 1 \right)$ for $u > -1$. This function transforms the energy functional \eqref{eq:energy_functional1} into the Fisher information. When considering the one--dimensional setting and selecting $S(x)$ as in \eqref{eq:S_delta}, it is necessary to ensure that $a > b$ to maintain $u > -1$ for arbitrary values of $D_1$, $D_2$. Consequently, Condition \eqref{condition1.1} can be used to establish the global well-posedness of the Fisher problem.
\end{remark}

Let us now consider the evolutionary equation for $D_2(t)$, i.e., \eqref{eq:Dt_x1}. Dividing it by $\Phi''(u(x_1, t))$, we obtain
\begin{align}
    \frac{D^2_2 (D_2)_t}{\Phi'' \left( \frac{(a-b)(1 - x_1)}{D_2(t)}\right)} = (a - b) \left[ a \frac{\Phi'' \left( \frac{a (x_1 - x_0)}{D_1(t)} + \frac{(a-b)(1 - x_1)}{D_2(t)}\right)}{\Phi'' \left( \frac{(a-b)(1 - x_1)}{D_2(t)}\right)} - b \right] \notag
\end{align}
and integration between $0$ and $t$ yields
\begin{align}
    \bigintsss_{D_{I, 2}}^{D_2(t)} \frac{y^2 \d y}{\Phi'' \left( \frac{(a-b)(1 - x_1)}{y}\right)} &= (a - b) \left[ a \bigintss_0^t \frac{\Phi'' \left( \frac{a (x_1 - x_0)}{D_1(s)} + \frac{(a-b)(1 - x_1)}{D_2(s)}\right)}{\Phi'' \left( \frac{(a-b)(1 - x_1)}{D_2(s)}\right)} \d s - b t  \right] \notag \\
    &=: (a - b) (a I(t) - b t), \label{eq:integral_delta_1D}
\end{align}
where 
\begin{align}
    \label{eq:I_delta_1D}
    I(t) := \bigintss_0^t \frac{\Phi'' \left( \frac{a (x_1 - x_0)}{D_1(s)} + \frac{(a-b)(1 - x_1)}{D_2(s)}\right)}{\Phi'' \left( \frac{(a-b)(1 - x_1)}{D_2(s)}\right)} \d s.
\end{align}
Here we have replaced $u(x_0, t)$, $u(x_1, t)$ by \eqref{eq:u_delta}.

We have the following collection of results.

\begin{proposition}\label{proposition_integrale^4_delta}
    Let $D$ be the solution of \eqref{eq:Dt_detailed_delta_1D} with associated uniformly positive initial condition and $\Phi$ be strictly convex.
    \begin{enumerate}
        \item \label{condition_3.1} If $a > b$ and 
        \begin{align}
        \label{eq:1integral^4_1D}
            \int_1^{\infty} \frac{\d v}{ \Phi''(v) v^4} = \infty,
        \end{align}
        then $D(x,t)>0$ $\forall (x, t) \in [0, 1] \times [0, \infty)$.
        \item \label{condition_3.2} If $a < b$, $\limsup_{w \rightarrow -\infty} \frac{\Phi''(z + w)}{\Phi''(w)}$ is a locally bounded function of $ z \in [0, \infty)$ and
        \begin{align}
            \int_1^{\infty} \frac{\d v}{\Phi''(- v) v^4} = \infty,
        \end{align}
        then $D(x,t)>0$ $\forall (x, t) \in [0, 1] \times [0, \infty)$.
        \item \label{condition_3.3} If $a > b$ and
        \begin{align}
            \label{eq:liminf_delta_1D}
            \liminf_{w \rightarrow \infty} \frac{\Phi''(z+w)}{\Phi''(w)} > \frac{b}{a}
        \end{align}
        uniformly for $z$ in compact subsets of $(0, \infty)$, then $D(x,t)>0$ $\forall (x, t) \in [0, 1] \times [0, \infty)$.
        \item \label{condition_3.4} If $a < b$ and
        \begin{align}
            \label{eq:limsup_delta_1D}
            \limsup_{w \rightarrow -\infty} \frac{\Phi''(z+w)}{\Phi''(w)} < \frac{b}{a}
        \end{align}
        uniformly for $z$ in compact subsets of $(0, \infty)$, then $D(x,t)>0$ $\forall (x, t) \in [0, 1] \times [0, \infty)$.
    \end{enumerate}
    \begin{proof}
        Assume $D_2(t) > 0 $ on $[0, T)$ and $D_2(T) = 0$. Consequently, we rewrite \eqref{eq:integral_delta_1D} as
        \begin{align}
            \label{eq:contradiction_integral_1D}
            - \bigintsss_{0}^{D_{I, 2}} \frac{y^2 \d y}{\Phi'' \left( \frac{(a-b)(1 - x_1)}{y}\right)} = (a - b) \left( a I(T) - b T \right),
        \end{align}
        where $I(T) > 0$ from the hypothesis of strictly convexity of $\Phi$. On the other hand, we notice that
        \begin{align}
            - \bigintsss_{0}^{D_{I, 2}} \frac{y^2 \d y}{\Phi'' \left( \frac{(a-b)(1 - x_1)}{y}\right)} = - \bigintss_{\frac{(a-b)(1-x_1)}{D_{I,2}}}^{\infty} \frac{(a-b)^3 (1 - x_1)^3}{\Phi''(v) v^4} \d v = - \infty \notag
        \end{align}
        from \eqref{eq:1integral^4_1D}. Therefore, we should have $I(T) = -\infty$ which contradicts the positivity of this term. Hence, $D_2(t) > 0 \, \, \forall t > 0$ and \eqref{condition_3.1} is proved. \\
        To prove \eqref{condition_3.2}, we use a similar argument. Indeed, \eqref{eq:contradiction_integral_1D} holds and
        \begin{align}
            - \bigintsss_{0}^{D_{I, 2}} \frac{y^2 \d y}{\Phi'' \left( - \frac{(b-a)(1 - x_1)}{y}\right)} = - \bigintss_{\frac{(b-a)(1-x_1)}{D_{I,2}}}^{\infty} \frac{(b-a)^3 (1 - x_1)^3}{\Phi''(- v) v^4} \d v = - \infty, \notag
        \end{align}
        which implies that $I(T) = \infty$.
        Next, let us define 
        \begin{align}
            w(t) &:= - \frac{(b - a)(1 - x_1)}{D_2(t)}, \notag \\
            z(t) &:= \frac{a (x_1 - x_0)}{D_1(t)}. \notag
        \end{align}
        Clearly, $z(t) \le z(0) < \infty$, and from the definition \eqref{eq:I_delta_1D} of $I(t)$ combined with the bounded limsup hypothesis we must have $I(T) < \infty$, which leads to a contradiction. \\
        From \eqref{eq:liminf_delta_1D}, \eqref{eq:I_delta_1D} we immediately notice that the right-hand side of \eqref{eq:contradiction_integral_1D} must be positive, and, conversely, the left-hand side is negative from the hypothesis of strict convexity of $\Phi(u)$ and this gives the contradiction to prove \eqref{condition_3.3}.
        
        The statement \eqref{condition_3.4} is proved using an analogous argument.
    \end{proof}
\end{proposition}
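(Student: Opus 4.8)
The plan is to prove all four items by contradiction, reducing each to the scalar identity \eqref{eq:integral_delta_1D}. Assume the maximal existence time $T$ is finite; by Theorem~\ref{Theorem:existence_1D} this forces $\liminf_{t\to T^-}\min_{[0,1]}D(\cdot,t)=0$, and since $D\equiv D_{I,0}$ on $[0,x_0)$ and $D_1\ge\gamma>0$ for all $t$ by \eqref{eq:Dt_x0}, this means $\liminf_{t\to T^-}D_2(t)=0$; in particular there is a sequence $t_n\uparrow T$ with $D_2(t_n)\to0$. Throughout I would use the following standing facts: $\Phi''>0$ (strict convexity), so the integrand of $I$ in \eqref{eq:I_delta_1D} is positive and $I$ is strictly increasing; $D_1$ is nondecreasing with $D_1\ge D_{I,1}$, so $z(t):=a(x_1-x_0)/D_1(t)$ stays in $(0,\,a(x_1-x_0)/D_{I,1}]$; and, by \eqref{eq:u_x0_delta}--\eqref{eq:u_x1_delta}, $u(x_0,t)=z(t)+w(t)$ where $w(t):=u(x_1,t)=(a-b)(1-x_1)/D_2(t)$, so $w(t)\to+\infty$ along $t_n$ if $a>b$ and $w(t)\to-\infty$ along $t_n$ if $a<b$.

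For \eqref{condition_3.1} and \eqref{condition_3.2} I would evaluate \eqref{eq:integral_delta_1D} along $t_n\uparrow T$ and pass to the limit, obtaining \eqref{eq:contradiction_integral_1D} with left-hand side $-\int_0^{D_{I,2}}y^2\,\d y/\Phi''((a-b)(1-x_1)/y)$. The substitution $v=|a-b|(1-x_1)/y$ rewrites this integral as $(|a-b|(1-x_1))^3\int_{|a-b|(1-x_1)/D_{I,2}}^{\infty}\d v/(\Phi''(\pm v)v^4)$ — sign $+$ when $a>b$, sign $-$ when $a<b$ — which is $+\infty$ by \eqref{eq:1integral^4_1D}, resp. its negative-axis analogue; hence the left-hand side of \eqref{eq:contradiction_integral_1D} equals $-\infty$. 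If $a>b$ (item \eqref{condition_3.1}), since $a-b>0$ the identity then forces $aI(T^-)-bT=-\infty$, i.e. $I(T^-)=-\infty$, impossible because the integrand of $I$ is positive. If $a<b$ (item \eqref{condition_3.2}), since $a-b<0$ it forces instead $I(T^-)=+\infty$; but $w(t)\to-\infty$ along $t_n$, $z(t)$ lies in a fixed compact set, and the locally-bounded-$\limsup$ assumption bounds the integrand $\Phi''(z(t)+w(t))/\Phi''(w(t))$ near $T$, so $I(T^-)<\infty$ — a contradiction.

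For \eqref{condition_3.3} and \eqref{condition_3.4} the integrability hypotheses are not needed; instead I would show that $D_2^3$ is increasing whenever $D_2$ is small, which rules out $\liminf_{t\to T^-}D_2(t)=0$. From \eqref{eq:Dt_x1}, $(D_2^3)_t=3(a-b)\,\Phi''(w(t))\bigl[a\,\Phi''(z(t)+w(t))/\Phi''(w(t))-b\bigr]$. When $D_2(t)$ is below a threshold $\delta_0$, $|w(t)|$ is large (positive if $a>b$, negative if $a<b$); with $z(t)$ in its compact range, the $\liminf$ hypothesis \eqref{eq:liminf_delta_1D} (case \eqref{condition_3.3}) makes the bracket $>0$ while $a-b>0$, whereas the $\limsup$ hypothesis \eqref{eq:limsup_delta_1D} (case \eqref{condition_3.4}) makes the bracket $<0$ while $a-b<0$; since $\Phi''(w(t))>0$, in both cases $(D_2^3)_t>0$ there. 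A short connectedness argument (a component of $\{D_2<\delta_0\}$ cannot have an interior left endpoint) then yields $D_2\ge\min(D_{I,2},\delta_0)>0$ on $[0,T)$, contradicting $\liminf_{t\to T^-}D_2(t)=0$. (Equivalently, restricting \eqref{eq:integral_delta_1D} to an interval on which $D_2<\delta_0$ makes its right-hand side positive while its left-hand side is negative.) In all four cases we conclude $T=+\infty$, so by Theorem~\ref{Theorem:existence_1D} $D(x,t)>0$ on $[0,1]\times[0,\infty)$.

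The main obstacle is making the one-sided asymptotic hypotheses on the ratio $\Phi''(z+w)/\Phi''(w)$ do real work: one must control this ratio — from above in \eqref{condition_3.2} and \eqref{condition_3.4}, from below by $b/a$ in \eqref{condition_3.3} — uniformly over the admissible values $z=z(t)$ once $|w(t)|$ is large, i.e. upgrade the stated $\limsup$/$\liminf$ control (with its uniformity over compact $z$-sets) to an estimate that is uniform in $t$ near $T$. A minor technical wrinkle is that $z(t)$ may approach $0$ if $D_1(t)\to\infty$ as $t\to T^-$, so the relevant compact set of $z$-values is $[0,\,a(x_1-x_0)/D_{I,1}]$ and includes the endpoint $0\notin(0,\infty)$; this is harmless since $\Phi''$ is continuous and positive and $\Phi''(0+w)/\Phi''(w)=1$ lies strictly on the correct side of $b/a$ (because $a\ne b$), so the hypothesis extends to the endpoint by an elementary compactness argument. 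Beyond these points, the change of variables, the passage to the limit in the identity, and the sign bookkeeping across the four cases are routine.
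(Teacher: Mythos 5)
Your proof is correct. For items \eqref{condition_3.1} and \eqref{condition_3.2} it follows essentially the same route as the paper: pass to the limit in the integrated identity \eqref{eq:integral_delta_1D}, use the substitution $v=|a-b|(1-x_1)/y$ to show that the left-hand side of \eqref{eq:contradiction_integral_1D} equals $-\infty$, and contradict either the positivity of $I(T)$ (item \eqref{condition_3.1}) or its finiteness, which follows from the locally bounded $\limsup$ hypothesis together with $z(t)\le z(0)$ (item \eqref{condition_3.2}). For items \eqref{condition_3.3} and \eqref{condition_3.4} you deviate in a way that is genuinely preferable. The paper applies the ratio hypothesis \eqref{eq:liminf_delta_1D} to the \emph{global} integral $I(T)$ and asserts that the right-hand side of \eqref{eq:contradiction_integral_1D} is positive; but that requires $I(T)/T\ge b/a$, i.e.\ that the integrand $\Phi''(u(x_0,s))/\Phi''(u(x_1,s))$ exceeds $b/a$ on average over all of $[0,T]$, whereas the hypothesis only controls this ratio asymptotically as $w=u(x_1,s)\to\pm\infty$, i.e.\ on the portion of the trajectory where $D_2(s)$ is small. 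If $D_2$ spends time at moderate values where the ratio may drop below $b/a$, the paper's one-line conclusion does not follow as stated. Your localized version --- computing $(D_2^3)_t=3(a-b)\Phi''(w)\bigl[a\,\Phi''(z+w)/\Phi''(w)-b\bigr]$, showing it is positive on the sublevel set $\{D_2<\delta_0\}$, and concluding $D_2\ge\min(D_{I,2},\delta_0)$ by a connectedness argument (equivalently, running the integral identity only over the final excursion below $\delta_0$) --- applies the asymptotic hypothesis exactly where it is valid and thereby closes this gap. Your auxiliary observations (monotonicity of $D_1$ from \eqref{eq:Dt_x0}, compactness of the range of $z(t)$, and the endpoint $z=0$ handled via $\Phi''(0+w)/\Phi''(w)=1\ne b/a$) are correct and make explicit what the paper leaves implicit.
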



\section{Numerical results}
In this section we present several numerical simulations of the spatially one--dimensional systems (\ref{eq:Dt_2}--\ref{eq:u_2}) and of the ODE presented in \eqref{eq:Dt_x1},
 where the solution only exists locally in time due to $D=D(x,t)$ touching zero in finite time.
We make use of a $3^{\rm rd}$ order in time numerical scheme, and we consider several entropy functions $\Phi$ to observe in which cases $D=D(x,t)$ vanishes in finite time.

A semi--implicit  discretization is adopted to achieve high order of accuracy in time. In particular, we make use of implicit-explicit (IMEX) Runge--Kutta schemes \cite{pareschi2000implicit, IMEX}, which are multi-step methods based on s-stages and typically represented with the double Butcher tableau,
\begin{table}[H]
\centering
\begin{tabular}{c|c}
$\widetilde c$ & $\widetilde A$ \\ \hline 
 & $\widetilde b^\top$
\end{tabular} \qquad
\begin{tabular}{c|c}
    $c$ & $\widehat A$ \\ \hline
     & $\widehat b^\top$
\end{tabular}
\end{table}
with the matrices $(\widetilde A, \widehat A) \in \mathbb R^{s\times s}$ and the vectors $(\widetilde b, \widehat b, \widetilde c, \widehat c) \in \mathbb R^s$ where $s>0$ is the number of stages of the Runge--Kutta method. The tilde symbol refers to the explicit scheme, and $\widetilde A = \{ \widetilde a_{i,j} \}$ is a
lower triangular matrix with zero elements on the diagonal, while $\widehat A = \{ \widehat a_{i,j} \}$ is a
triangular matrix which accounts for the implicit scheme, thus having non-zero elements on the diagonal. Here, we adopt IMEX schemes with $\widetilde b = \widehat b$, and the stiffly accurate property in the implicit part.


First, we discretize in space and time the system in (\ref{eq:Dt_2}--\ref{eq:u_2}), and after we focus on the case \eqref{eq:S_delta} in which the sink/source distribution $S=S(x)$ is given by a superposition of Dirac delta distributions.

\subsection{Discretization in space and time}
In this section we discretize in space and time the spatially one--dimensional system (\ref{eq:Dt_2}--\ref{eq:u_2}), which we recall here for the sake of the reader,
\begin{align}
\label{eq_D}
    D^2D_t &= R(x)V(x), \quad x\in\Omega, \, t\in[0,t_{\rm fin}] \\
    \label{eq_u}
    u &= \int_x^1 \frac{R(y)}{D}\, dy
\end{align}
where $V(x) = \int_0^x\Phi''(u)S(y)\,dy$ and $R(x) = \int_0^xS(y)\,dy$. The expression that we choose for the sink/source function, $S(x)$, is the following
\begin{align}
\label{eq_S_deltas}
    S(x) = m x + q, \quad m, q \in \mathbb R,
\end{align}
where the parameters $m$ and $q$ are chosen in such a way the function $S=S(x)$ changes sign in $[0,1]$, while its primitive $R=R(x)$ remains non-negative for every $x$. To close the system, we prescribe the initial condition $D(t=0) = D_I \in \mathbb R$. 

For a fixed $N \in \mathbb{N}$ we 
discretize the spatial domain $\Omega = [0,1]$ using the equidistant grid \textbf{x} = \{ $\textbf{x}_i = ih;\, i = 0,\cdots,N; \, h = 1/N$\}.
The discrete quantities shall be represented by bold letters. In particular, ${\bf S} = \{{\bf S}_i \approx S({\bf x}_i), i = 1,\cdots, N\}$ represents the vector that approximates the function $S(x)$ in the points ${\bf x}_i$. Moreover, we define
\begin{align}
\nonumber    \textbf{R} = m \frac{\textbf{x}^2}{2} + q \textbf{x} \\ \label{eq_ui}
    0 = \textbf{u}_N = \textbf{u}({\bf x}_N) \approx u(x = 1) \\
    \textbf{u}_{N-i} = \sum_{j = 1}^i \textbf{I}^u_{N-j}\, \quad i = 1,\cdots,N \\
     0 = {\bf V}_0 = V({\bf x}_0) \approx V(x = 0) \\
    \textbf{V}_{i} = \sum_{j = 1}^i \textbf{I}^V_{j}\, \quad i = 1,\cdots,N
\end{align}
where $\textbf{I}^u_i = (\textbf{R}_i/\textbf{D}_i + \textbf{R}_{i+1}/\textbf{D}_{i+1})h/2$ and $\textbf{I}^V_i = ({\Phi''}_i\textbf{S}_i + {\Phi''}_{i-1}\textbf{S}_{i-1})h/2$.

Now we multiply and divide \eqref{eq_D} by $\textbf{D}$
\begin{equation}
\label{eq_2_discr}
    \textbf{D}_t = M(\textbf{D})\textbf{D}
\end{equation}
where 
\[M(\textbf{D}) = \frac{\textbf{R}(x)\textbf{V}(x)}{\textbf{D}^3}.\]

To apply the partitioned Runge--Kutta method to \eqref{eq_2_discr}, let us first set $\textbf{D}^1_E = \textbf{D}^n$, then the stage fluxes for $i = 1, \cdots, s$ are calculated as
\begin{align}
\label{eq_imex_DE}
    \textbf{D}_E^{i}& = \textbf{D}^n + \Delta t\,\sum_{j=1}^{\rm s-1}\widetilde a_{i,j}M(\textbf{D}_E^j)\textbf{D}_I^j, \quad i = 1,\cdots,s \\
\label{eq_imex_DI}
    \textbf{D}_I^{i} &= \textbf{D}^n + \Delta t\,\sum_{j=1}^{\rm s} \widehat a_{i,j}M(\textbf{D}_E^j)\textbf{D}_I^j, \quad i = 1,\cdots,s 
\end{align}
and the numerical solution is finally updated with
\begin{align}
\label{eq_imex_bi}
    \textbf{D}^{n+1} &= \textbf{D}^n + \Delta t\sum_{i=1}^{\rm s} \widehat b(i) M(\textbf{D}_E^i)\textbf{D}_I^i
\end{align}
where $\rm s$ is the number of stages of the scheme, and $\Delta t>0$ the time step.


\subsubsection{Accuracy test and Results}
In this section we first check the accuracy of the discretization. We start with a simple case, with a specific choice of the second derivative of the entropy function, $\Phi''(u) = 1$, which enables us to find an exact solution (see Fig.~\ref{fig_accuracy_Du} (a)). Moreover, we test the accuracy of the time discretization in a general setting, with a reference solution calculated with a fine grid (see Fig.~\ref{fig_accuracy_Du} (b)). 

A generic IMEX Runge--Kutta scheme is described with a triplet $(\widehat s, \widetilde s, p)$ which characterizes the number $\widehat s$ of stages of the implicit scheme, the number $\widetilde s$ of stages of the explicit scheme and the order $p$ of the resulting scheme.

A Runge--Kutta scheme is of order 3 if and only if the following conditions are satisfied \cite[Theorem 2.1]{hairer}:
\begin{align}
\label{eq_condition_3orderA}
    \sum_j b_j &= 1, \quad &  2\sum_{j,k} b_ja_{j,k} &= 1, \\
    \label{eq_condition_3orderB}
    3\sum_{j,k,l}b_j a_{j,k}a_{j,l} &= 1, \quad  & 6\sum_{j,k,l}b_ja_{j,k}a_{k,l} &= 1.
\end{align}
As third semi--implicit Runge–Kutta methods that satisfies the set of order conditions (\ref{eq_condition_3orderA}--\ref{eq_condition_3orderB})
is given by the IMEX-SSP3(4,3,3) L-stable Scheme: SSP-LDIRK3(4,3,3) with $ \widehat b = \widetilde b$, i.e.
\begin{table}[H]
\centering
\begin{tabular}{c|c c c c}
0 & 0 & 0 & 0 & 0 \\  
0 & 0 & 0 & 0 & 0 \\ 
1 & 0 & 1 & 0 & 0 \\  
1/2 & 0 & 1/4 & 1/4 & 0 \\ \hline 
 & 0 & 1/6 & 1/6 & 2/3 \\ 
\end{tabular} \qquad
\begin{tabular}{c|c c c c}
$\lambda$ & $\lambda$ & 0 & 0 & 0 \\  
0 & -$\lambda$ & $\lambda$ & 0 & 0 \\ 
1 & 0 & $1-\lambda$ & $\lambda$ & 0 \\  
1/2 & $\mu$ & $\eta$ & $1/2 - \mu - \eta - \lambda$ & $\lambda$ \\ \hline 
 & 0 & 1/6 & 1/6 & 2/3 \\ 
\end{tabular}
\end{table}
with $\lambda = 0.24169426078821$, $\mu = \lambda/4$ and $\eta = 0.12915286960590$ \cite{IMEX}.

\begin{figure}[ht]
\centering
\begin{minipage}[b]
		{.45\textwidth}
		\centering
	\begin{overpic}[abs,width=\textwidth,unit=1mm,scale=.25]{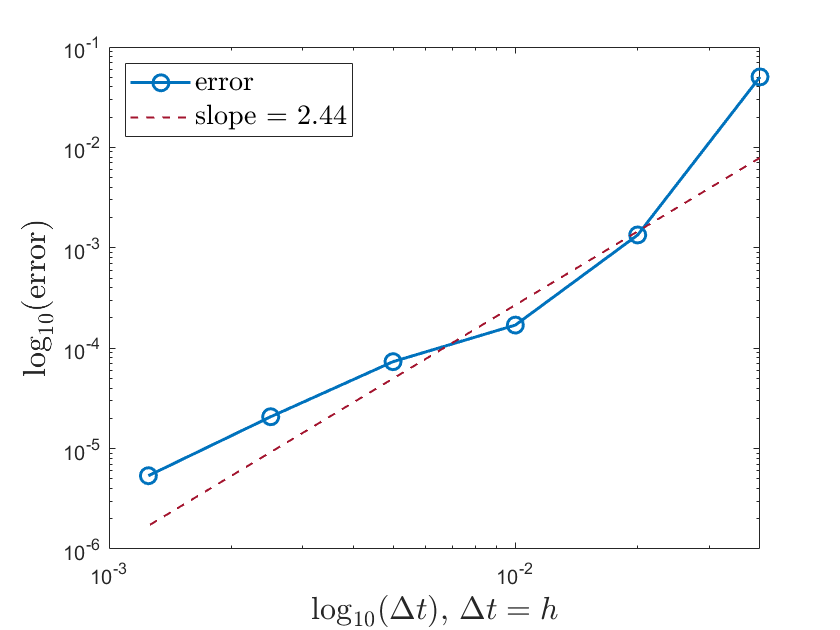}
 \put(1,54){(a)}
\end{overpic}
\end{minipage}
\begin{minipage}[b]
		{.45\textwidth}
		\centering
\begin{overpic}[abs,width=\textwidth,unit=1mm,scale=.25]{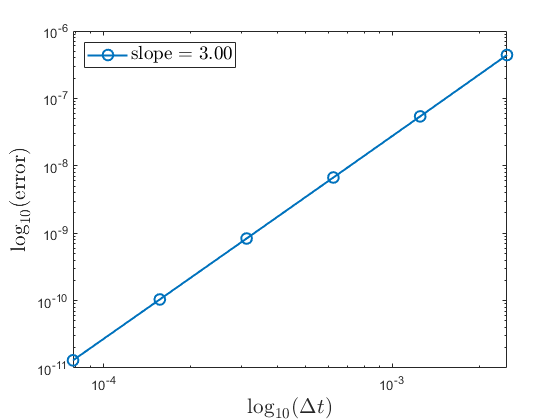}
\put(1,54){(b)}
\end{overpic}
\end{minipage} 
\caption{\textit{(a): Space and time accuracy plot of the numerical scheme defined in (\ref{eq_ui}--\ref{eq_imex_bi}) at final time $t = 0.1$. The expression chosen for the second derivative of the entropy function is ${\bf \Phi}''({\bf u}) = 1$, while the Butcher Tableau are defined in SSP-LDIRK3(4,3,3) scheme. (b). Time accuracy of the numerical scheme defined in ~(\ref{eq_ui}--\ref{eq_imex_bi})  at final time $t = 0.01$. The expression chosen for the second derivative of the entropy function is ${\bf \Phi}''({\bf u}) = (u^2(\sin(u) + \sigma) +1)^{-1}$, and $\sigma$ is chosen {such that} {the convexity of ${\bf \Phi}''({\bf u})$ is maintained}, i.e., $\sigma = 2$. In both panels $m = -1.98$ and $ q = 1$ in the expression for the sink/source function in ~\eqref{eq_S_deltas}.}}
\label{fig_accuracy_Du}
\end{figure}

In Fig.~\ref{fig_accuracy_Du} we show the order of accuracy of the numerical scheme, defined in (\ref{eq_ui}--\ref{eq_imex_bi}). To do this, we calculate a reference solution, and then we compute the relative error as follows
\begin{align}
\label{eq:error}
    {\rm error} = \frac{||{\bf D}_{h,\Delta t} - {\bf D}_{\rm ref}||_2}{||{\bf D}_{\rm ref}||_2}
\end{align}
where ${\bf D}_{h,\Delta t}$ is the numerical solution with space step equal to $h$ and time step equal to $\Delta t$, while ${\bf D}_{\rm ref}$ is calculated in two different ways. In panel (a), we consider a simple case, in which the entropy function is defined as $\Phi''(u) = 1$. In this case we have an explicit expression for the exact solution of the system (\ref{eq_D}--\ref{eq_u}), and in order to test the accuracy in space and time, we consider $h = \Delta t \in \{0.04, 0.02, 0.01, 0.005, 0.0025\}$ for ${\bf D}_{h,\Delta t}$.
In panel (b), we fix $h = 10^5$, and we calculate the time accuracy, fixing a reference time step, $\Delta t_{\rm ref} = 10^{-7}$, and a corresponding reference solution ${\bf D}_{\rm ref}$. For ${\bf D}_{h,\Delta t}$ we choose $\Delta t = 0.1\cdot 2^{-k},\, k\in\{2,3,4,5,6,7\}$.
\begin{figure}[H]
	\centering
\begin{minipage}[b]
		{.45\textwidth}
		\centering
	\begin{overpic}[abs,width=\textwidth,unit=1mm,scale=.25]{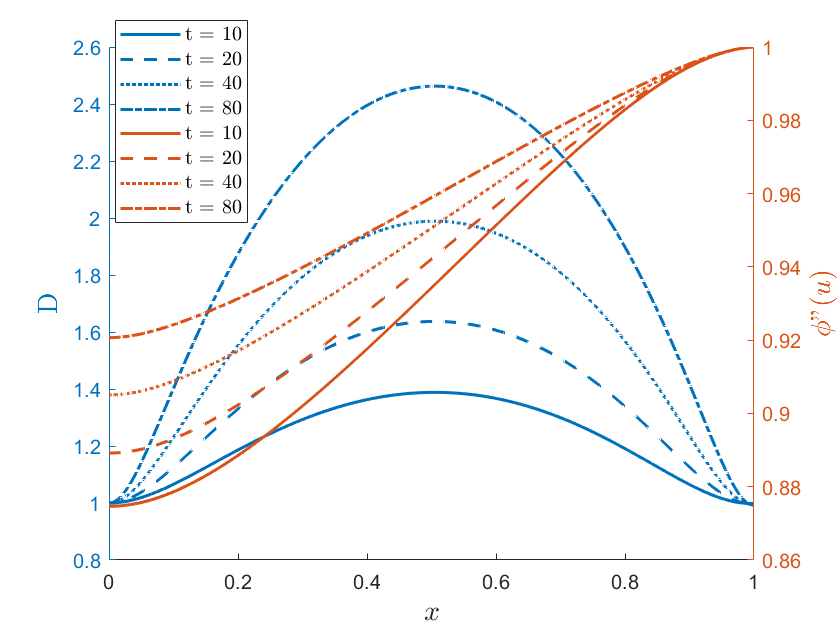}
\put(1,54){(a)}
\end{overpic}
\end{minipage} \\
    \begin{minipage}[b]
		{.45\textwidth}
		\centering
	\begin{overpic}[abs,width=\textwidth,unit=1mm,scale=.25]{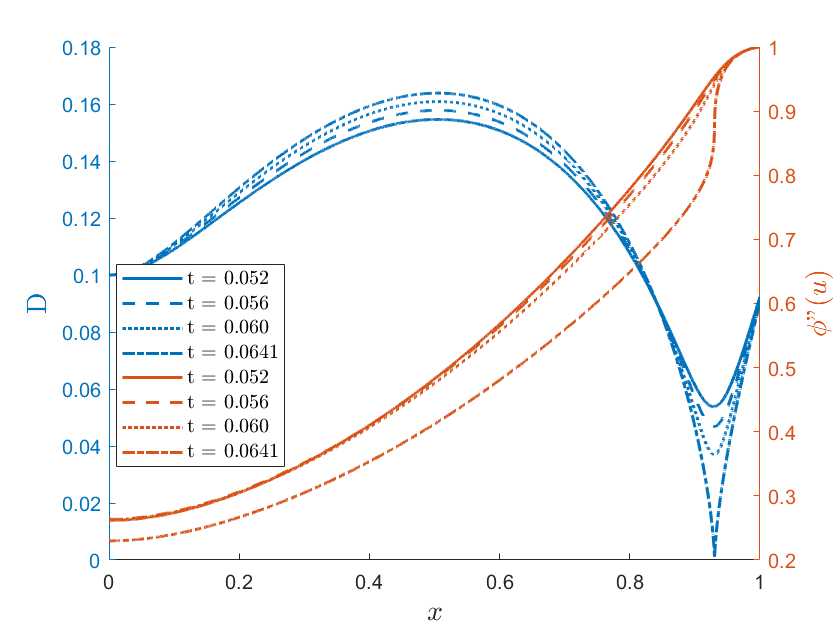}
\put(1,54){(b)}
\end{overpic}
\end{minipage}
	\begin{minipage}[b]
		{.45\textwidth}
		\centering
	\begin{overpic}[abs,width=\textwidth,unit=1mm,scale=.25]{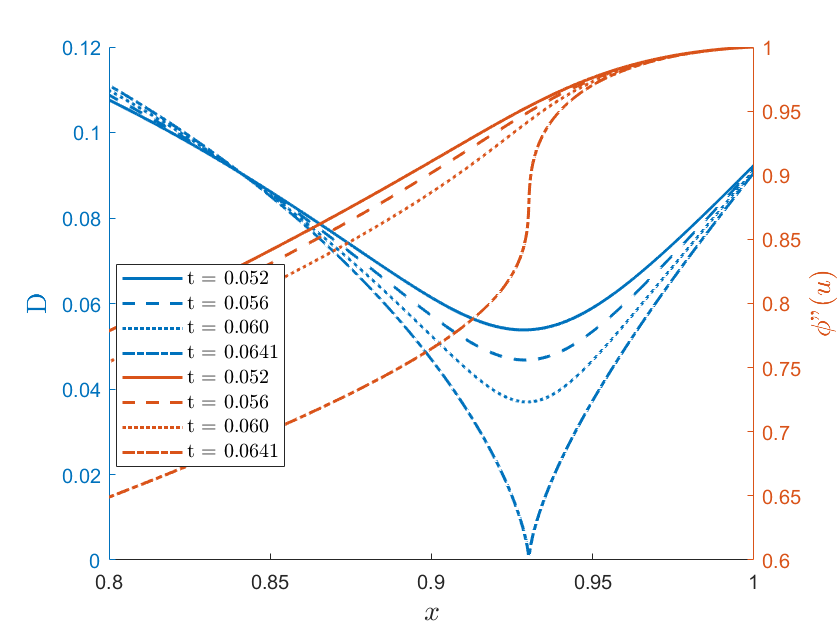}
 \put(1,54){(c)}
\end{overpic}
\end{minipage}
\caption{\textit{Plot of the solution {\bf D} (left part of each plot, in blue), together with the function {${\bf \Phi}''({\bf u})$} (right part of each plot, in orange), for different times. The expression of the second derivative of the entropy function is ${\bf \Phi}''({\bf u}) = \exp(-u)$, the initial condition is {\bf D}(t=0) = 1 (a), {\bf D}(t=0) = 0.1 (b), and a zoom-in of panel (b) in (c), while $m = -1.98$ and $q = 1$ in the expression \eqref{eq:S_delta} and $\Delta t = 10^{-6}, h = 10^{-4}$.}}
\label{fig_exponential_Din1}
\end{figure}

In Fig.~\ref{fig_exponential_Din1} we show the solution ${\bf D}$ (left part of each plot, in blue), together with ${\bf \Phi}''(u)$ (right part of each plot, in orange) with two different initial conditions:  ${\bf D}(t = 0) = 1$ in (a) and ${\bf D}(t = 0) = 0.1$ in (b) and (c) panels. The expression for the second derivative of the entropy function is $\Phi''(u) = \exp(-u)$, while $ m = -1.98$ and $q = 1$ in the expression \eqref{eq_S_deltas}. In these plots we show ${\bf D}$ and ${\bf \Phi}''(u)$ in different moments, to show how they evolve in time. It is interesting to see that after finite time the solution ${\bf D}$ touches zero if the initial condition is small enough. We show ${\bf \Phi}''(u)$ in the same plot because, analyzing equation \eqref{eq:Dt_2}, it is evident that ${\bf \Phi}''(u)$ acts as a weight, dependent on the solution itself ${\bf D}$, for the integral involving $S(x)$. To induce a change in sign in ${\bf D}$, it is necessary for the integral to become negative at a rate that is sufficient to drive the solution towards zero. Consequently, we select $S(x)$ to ensure that ${\bf \Phi}''(u)$ becomes significantly big when $S(x)$ is negative.

Figs.~\ref{fig_fisher_Din1}--\ref{fig_u2sin_Din1} are equivalent to Fig.~\ref{fig_exponential_Din1}, with different expressions for the entropy function. In Fig.~\ref{fig_fisher_Din1} $\Phi''(u) = (u+1)^{-1}$, while in Fig.~\ref{fig_u2sin_Din1} $\Phi''(u) = (u^2(\sin(u)+\sigma)+1)^{-1}$. We observe that, with the right choice of the initial condition, i.e. ${\bf D}(t=0) = 0.1$, the solution touches zero in finite time. Here, again, we choose as final time a few time steps prior the function $\bf D$ touches zero, due to the presence of spurious numerical oscillations arising from the division by $\bf D$.

In Fig.~\ref{fig:minD} we show that it is easier for the numerical scheme to compute the solution $\bf D$ that approaches to zero when we refine the time step. Since there is a division by $\bf D$ in the expression of $\bf u$ (see, e.g., \eqref{eq:u_1}), spurious oscillations start to appear in the numerical simulations when $\bf D$ is close to zero and, for this reason, we are not able to show the exact time when $\bf D$. Fig.~\ref{fig:minD} confirms that, even if we are not able to plot that moment, the trend of the numerical results are in agreement when $\Delta \to 0$. 

\begin{figure}[h]
	\centering
\begin{minipage}[b]
		{.45\textwidth}
		\centering
	\begin{overpic}[abs,width=\textwidth,unit=1mm,scale=.25]{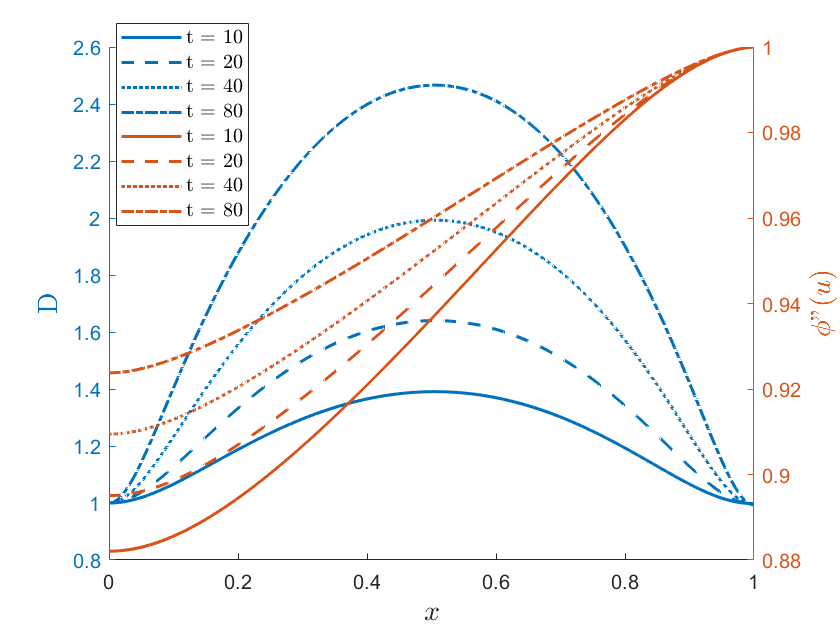}
\put(1,54){(a)}
\end{overpic}
\end{minipage} \\
\begin{minipage}[b]
		{.45\textwidth}
		\centering
	\begin{overpic}[abs,width=\textwidth,unit=1mm,scale=.25]{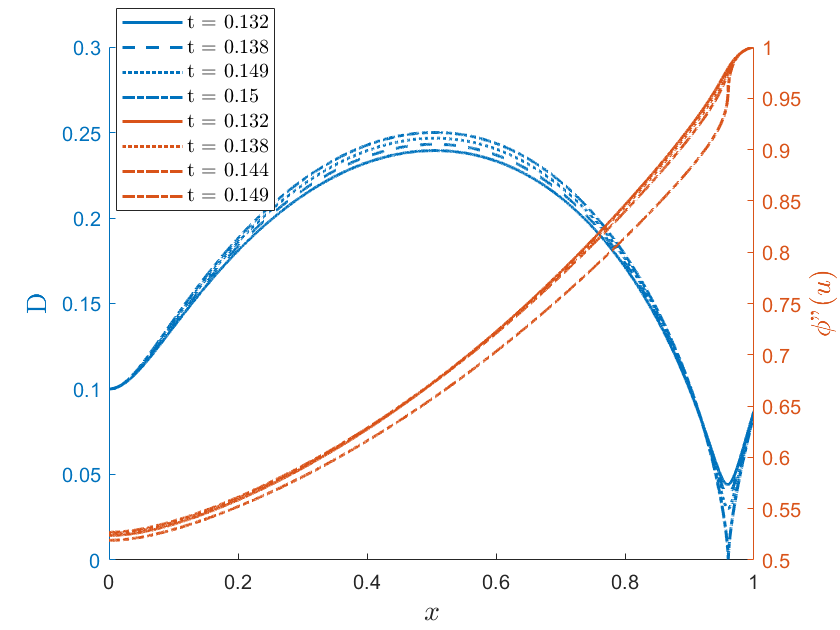}
\put(1,54){(b)}
\end{overpic}
\end{minipage} 
	\begin{minipage}[b]
		{.45\textwidth}
		\centering
	\begin{overpic}[abs,width=\textwidth,unit=1mm,scale=.25]{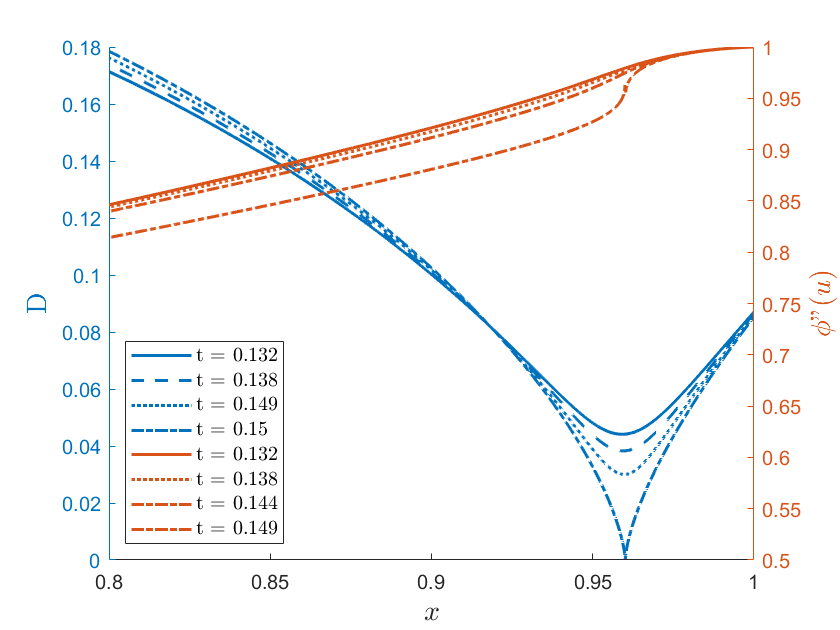}
\put(1,54){(c)}
\end{overpic}
\end{minipage}
\caption{\textit{Plot of the solution {\bf D} (left part of each plot, in blue), together with the function {$\bf \Phi''(u)$} (right part of each plot, in orange), for different times. The second derivative of the entropy function is ${\bf\Phi}''({\bf u}) = (u+1)^{-1}$, and the initial condition is {\bf D}(t=0) = 1 (a), {\bf D}(t=0) = 0.1 (b), and zoom-in (c), while $m = -1.98$ and $q = 1$ in the expression \eqref{eq:S_delta} and $\Delta t = h = 10^{-3}$.}}
\label{fig_fisher_Din1}
\end{figure}

\begin{figure}[h]
	\centering
\begin{minipage}[b]
		{.45\textwidth}
		\centering
	\begin{overpic}[abs,width=\textwidth,unit=1mm,scale=.25]{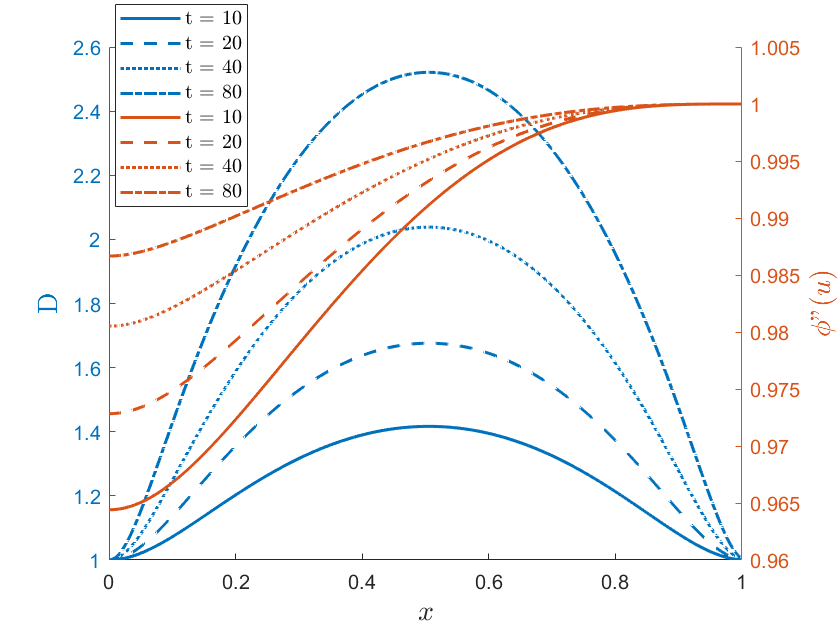}
\put(1,54){(a)}
\end{overpic}
\end{minipage} \\
\begin{minipage}[b]
		{.45\textwidth}
		\centering
	\begin{overpic}[abs,width=\textwidth,unit=1mm,scale=.25]{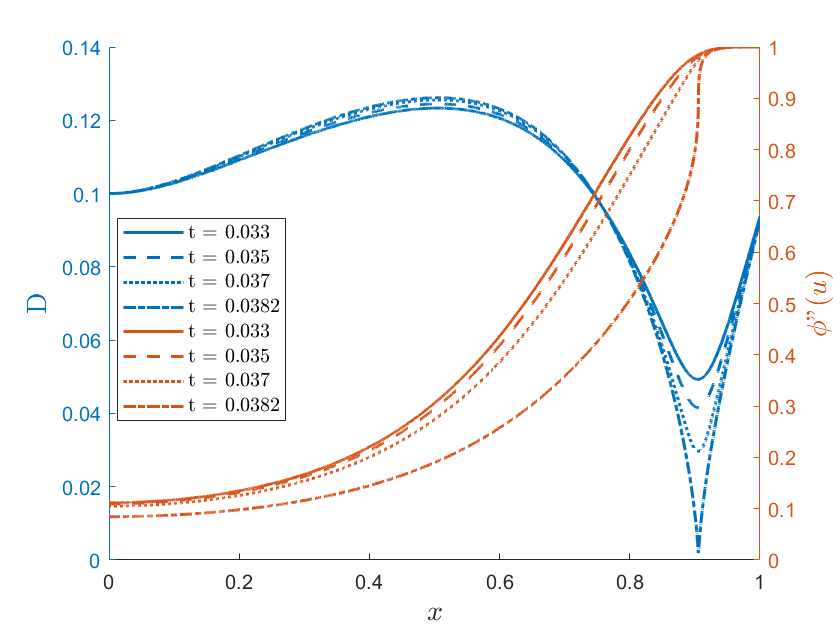}
\put(1,54){(b)}
\end{overpic}
\end{minipage}
\begin{minipage}[b]
		{.45\textwidth}
		\centering
	\begin{overpic}[abs,width=\textwidth,unit=1mm,scale=.25]{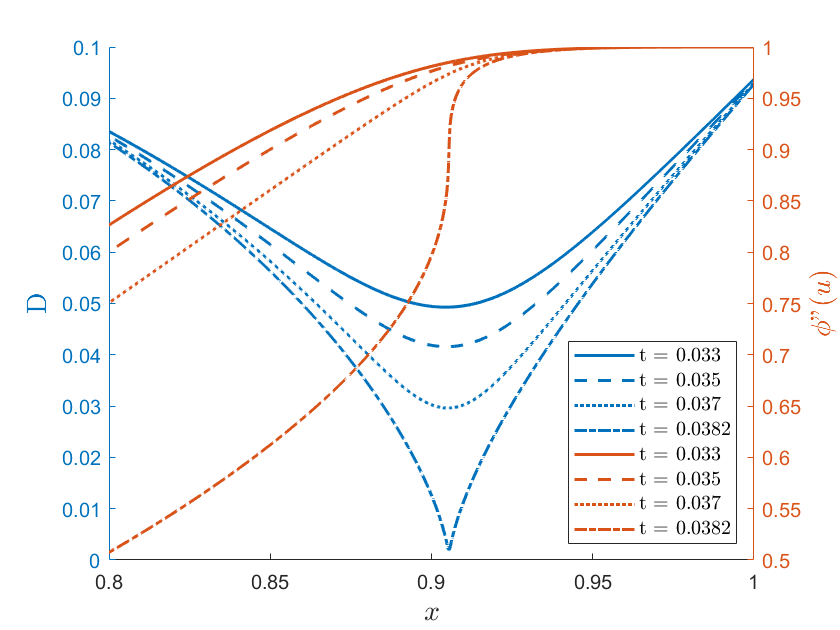}
\put(1,54){(c)}
\end{overpic}
\end{minipage}
\caption{\textit{Plot of the solution {\bf D} (left part of each plot, in blue), together with the function {$\bf \Phi''(u)$} (right part of each plot, in orange), for different times. The second derivative of the entropy function is $\Phi''(u) = (u^2(\sin(u)+\sigma)+1)^{-1}$, and the initial condition is {\bf D}(t=0) = 1 (a), {\bf D}(t=0) = 0.1 (b), and zoom-in (c), while $m = -1.98$ and $q = 1$ in the expression \eqref{eq:S_delta} and $\Delta t = 10^{-6}$ and $h = 10^{-4}$.}}
\label{fig_u2sin_Din1}
\end{figure}
In Fig.~\ref{fig_u2sin_larger_domain} we consider a larger domain, i.e., $x \in [0,1.5]$, to show that the point in which the solution touches zero does not depend neither on the domain, nor on its boundary. On the contrary, it depends on the definition of the function $S(x)$ and on the function $\bf \Phi''(u)$. In Fig.~\ref{fig_u2sin_larger_domain} $m = -1.98/1.5$ and $q = 1$ in \eqref{eq_S_deltas}. 

\begin{figure}[h]
    \centering
    \includegraphics[width=0.65\textwidth,unit=1mm]{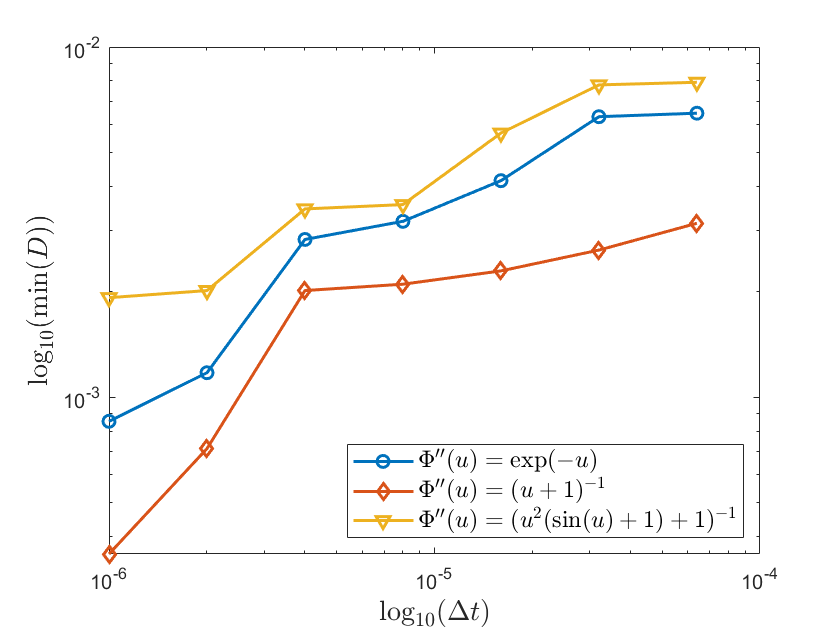}
    \caption{\textit{In this plot we show the minimum of the function $\bf D$ at final time, changing the time step $\Delta t$, for different expressions of ${\bf \Phi}''({\bf u})$. The final time $t_{\rm fin}$ is chosen such that at time $t_{\rm fin} + \Delta t$ spurious oscillations start to appear. }}
    \label{fig:minD}
\end{figure}

\begin{figure}[h]
	\centering
\begin{minipage}[b]
		{.45\textwidth}
		\centering
	\begin{overpic}[abs,width=\textwidth,unit=1mm,scale=.25]{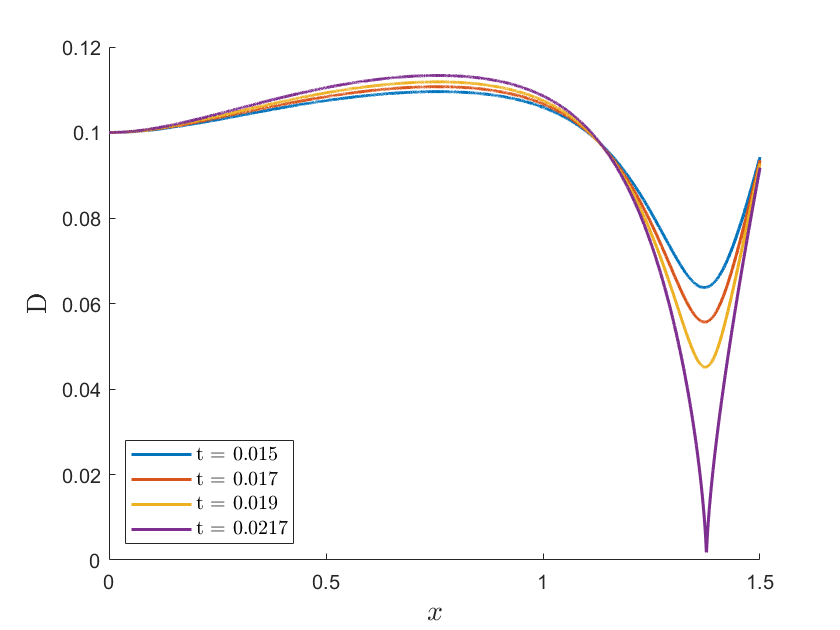}
\put(1,54){(a)}
\end{overpic}
\end{minipage}
\begin{minipage}[b]
		{.45\textwidth}
		\centering
	\begin{overpic}[abs,width=\textwidth,unit=1mm,scale=.25]{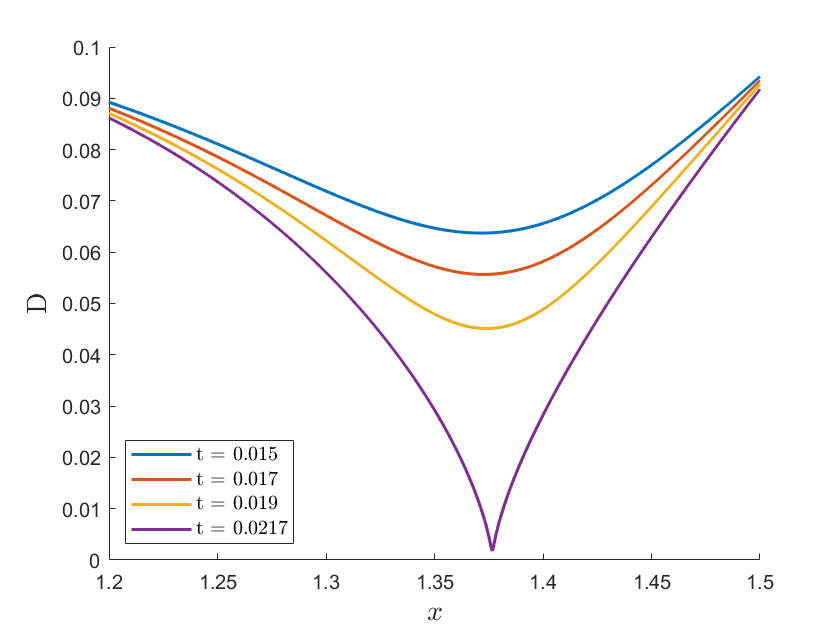}
\put(1,54){(b)}
\end{overpic}
\end{minipage}
\caption{\textit{In this specific example, we consider a larger domain, with $x\in [0,1.5]$. Since the function $R(x) = \int S(x) dx > 0$, we choose another expression for the function $S(x)$ in \eqref{eq_S_deltas}, with $m = -1.98/1.5$ and $q = 1$. Here we show the solution {\bf D}  for different times (a) and a zoom-in (b) in x-direction, to show in which part of the domain the solution is closer to zero. The initial condition is {\bf D}(t=0) = 0.1, $\Delta t = 10^{-6}$ and $h = 10^{-4}$.}}
\label{fig_u2sin_larger_domain}
\end{figure}

\subsection{Time discretization for the $\delta$-system}
In this section we discretize in time the expression in \eqref{eq:Dt_x1}, where the choice for the sink/source function is defined as a sum of delta functions, in \eqref{eq:S_delta}. Here we rewrite \eqref{eq:Dt_x1} as
\begin{align}    
\displaystyle
 \frac{\partial D_1}{\partial t} =& \frac{1}{D_1^2} a^2 \, \Phi'' ( u(x_0,t)) &  x \in [x_0, x_1), \, t\in[0,t_{\rm fin}] \label{eq:Dt_x1_2} \\
 \frac{\partial D_2}{\partial t} = & \frac{1}{D_2^2} (a - b) \left[ a \Phi'' (u(x_0,t)) - b \Phi''(u(x_1, t)) \right] & x \in [x_1, 1], \, t\in[0,t_{\rm fin}]. \label{eq:Dt_x2_2}
\end{align}
where 
\begin{align}
    \label{eq:u_x0_delta_2} u(x_0, t) &= \frac{a(x_1 - x_0)}{D_1(t)} + \frac{(a-b)(1-x_1)}{D_2(t)}, \\
    \label{eq:u_x1_delta_2} u(x_1, t) &= \frac{(a-b) (1-x_1)}{D_2(t)}. 
\end{align}
{To fix ideas, we focus on the case $a > b$.}
The expression for the entropy we are interested in, is the following
\( 
\Phi''(u) = (u^2(\sin(u)+\sigma)+1)^{-1},\)
where $\sigma$ is selected such that
\[\displaystyle \frac{\sigma - 1 }{\sigma + 1} < \frac{b}{a},\]
ensuring that we do not fall into the case \eqref{eq:liminf_delta_1D} addressed by the existing theory.

Equations (\ref{eq:Dt_x1_2}--\ref{eq:Dt_x2_2}) are two non-linear ODE's, and to apply IMEX schemes as before, we multiply and divide by the same quantities the two equations: $D_1$ in the first equation and $D_2$ in the second one, obtaining
\begin{align}    
\displaystyle
 \frac{\partial D_1}{\partial t} =& M_1(D_1,D_2)\, D_1 \qquad  x \in [x_0, x_1), \label{eq:Dt_x1_3} \\
 \frac{\partial D_2}{\partial t} = & M_2(D_1,D_2)\, D_2 \qquad x \in [x_1, 1]. \label{eq:Dt_x2_3}
\end{align}
where
\begin{align}
    M_1(D_1,D_2) & = \frac{1}{D_1^3} a^2 \, \Phi'' ( u(x_0,t)) \\
    M_2(D_1,D_2) &=\frac{1}{D_2^3} (a - b) \left[ a \Phi'' (u(x_0,t)) - b \Phi''(u(x_1, t)) \right]
\end{align}

Now we apply IMEX scheme to (\ref{eq:Dt_x1_2}--\ref{eq:Dt_x2_2}), 
and the scheme reads
\begin{align}
    D_{1,E}^{i}& = D_{1,E}^n + \Delta t\,\sum_{j=1}^{\rm s-1}\widetilde a_{ i,j}M_1(D_{1,E}^{ j},D_{2,E}^{ j})D_{1,I}^{ j}, \quad i = 1,\cdots,s \\
    D_{2,E}^{i}& = D_{2,E}^n + \Delta t\,\sum_{j=1}^{\rm s-1}\widetilde a_{i,j}M_2(D_{1,E}^{ j},D_{2,E}^{ j})D_{2,I}^{ j}, \quad i = 1,\cdots,s \\
    D_{1,I}^{i} &= D_{1,I}^n + \Delta t\,\sum_{j=1}^{\rm s} \widehat a_{ i,j}M_1(D_{1,E}^{ j},D_{2,E}^{ j})D_{1,I}^{ j}, \quad i = 1,\cdots,s \\
    D_{2,I}^{i} &= D_{2,I}^n + \Delta t\,\sum_{j=1}^{\rm s} \widehat a_{ i,j}M_2(D_{1,E}^{ j},D_{2,E}^{ j})D_{2,I}^{ j}, \quad i = 1,\cdots,s. 
\end{align}
The numerical solution is finally updated with
\begin{align}
\label{eq_imex_D1D2_new}
    D_1^{n+1} &= D_{1}^n + \Delta t\sum_{i=1}^{\rm s} \widehat b_{\rm i} M_1(D_{1,E}^i,D_{2,E}^i){D}_{1,I}^i \\
    D_2^{n+1} &= D_{1}^n + \Delta t\sum_{i=1}^{\rm s} \widehat b_{\rm i} M_2(D_{1,E}^{\rm i},D_{2,E}^{\rm i}){D}_{2,I}^{\rm i}.
\end{align}

\subsubsection{Accuracy test and Results}
In this section we first prove the accuracy of the time discretization, where the IMEX scheme used is a third order semi--implicit Runge–Kutta methods is given by the IMEX-SSP3(4,3,3) L-stable Scheme: SSP-LDIRK3(4,3,3).

In Fig.~\ref{fig_accuracy_D1D2} we show the third--order accuracy in time of the numerical scheme in \eqref{eq_imex_D1D2_new}, on the left of the plot (in blue) for the variable $D_1$, and on the right of the plot (in orange) for the variable $D_2$. The initial condition chosen is $D^0_{1,E} = D^0_{2,E} = 1$, the expression for the second derivative of the entropy function is $\Phi''(u) = u^2$ and the other parameters are : $x_0 = 0.1, x_1 = 0.9, a = 1, b = 0.9$. 

In Fig.~\ref{fig_u2sin_Din01_delta} we show the time evolution of the variables $D_1,D_2$ in (a), a zoom-in in time of panel (a) in (b) and of $\Phi''(u(x_1,t))$ in (c), where the final time $t_{\rm fin}$ is chosen such that at time $t_{\rm fin} + \Delta t$ spurious oscillations appear. However, we are able to show the change of sign of the variable $D_2$: at time $t = t_{\rm fin}-\Delta t$ its value is positive, while at time $t = t_{\rm fin}$ it is negative.
 Another agreement is showed in panel (d), where we see the minimum of the variable $D_2$ at final time $t_{\rm fin}$ (chosen in such a way that at time $t_{\rm fin} + \Delta t$ the solution $D_2$ becomes negative). We see that in panel (d) $D_2$ goes closer to zero when we refine the time step, in alignment with the expectations as $\Delta t \to 0$. The initial condition in these tests is $D_1(t=0) = D_2(t=0) = 0.1,$ and the other parameters in (\ref{eq:Dt_x1_2}--\ref{eq:u_x1_delta_2}) are: $x_0 = 0.1, x_1 = 0.9, a = 1, b =0.99$ and $\Delta t = 10^{-6}$. The expression for the second derivative of the entropy function is $\Phi''(u) = (u^2(\sin(u)+\sigma)+1)^{-1},$ with $\sigma = 2$.

\begin{figure}[H]
\centering
\begin{overpic}[abs,width=0.6\textwidth,unit=1mm,scale=.25]{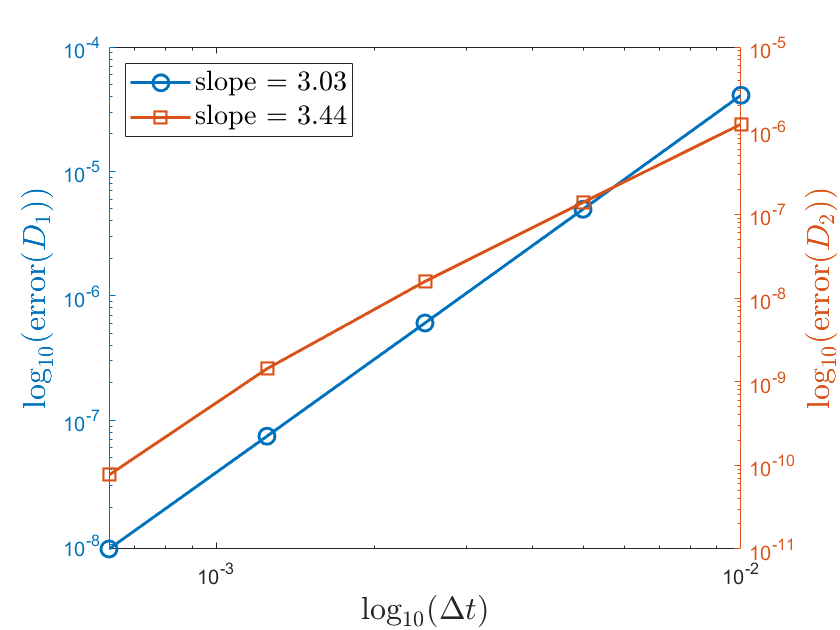}
\end{overpic}
\caption{\textit{Accuracy plot of the numerical scheme defined in \eqref{eq_imex_D1D2_new}, where the Butcher Tableau are defined in SSP-LDIRK3(4,3,3) scheme. Here we have in the same plot, (on the left part, in blue) the error of the $D_1$ variable, and (on the right part, in orange) the error of the $D_2$ variable. The expression for the error is the same as the one in \eqref{eq:error}, with $\Delta t_{\rm ref} = 10^{-5}$. In this tests the expression for the second derivative of the entropy function is $\Phi''(u) = u^2$. The other parameters are: $x_0 = 0.1, x_1 = 0.9, a = 1, b = 0.9$.}}
\label{fig_accuracy_D1D2}
\end{figure}

\begin{figure}[H]
\begin{minipage}[b]
		{.45\textwidth}
		\centering
\begin{overpic}[abs,width=\textwidth,unit=1mm,scale=.25]{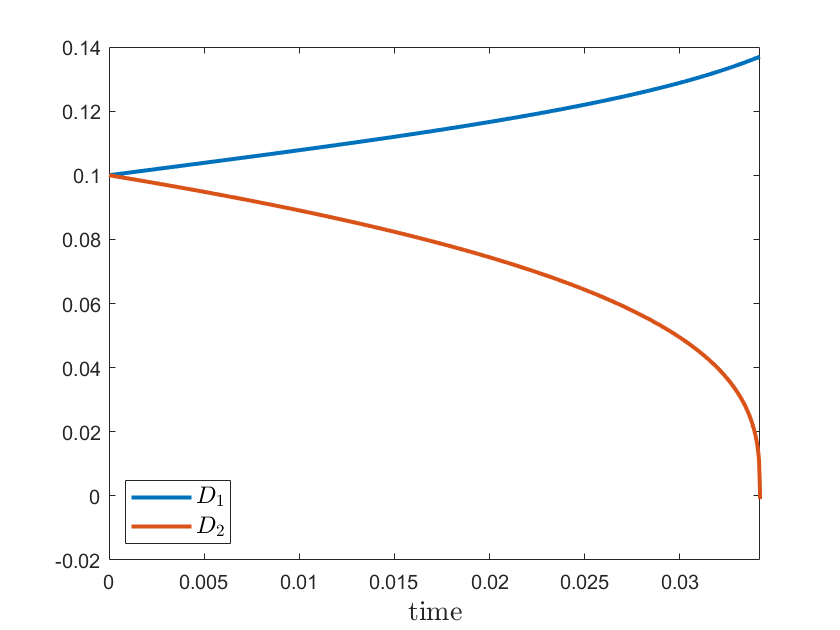}
\put(1,54){(a)}
\end{overpic}
\end{minipage}
\begin{minipage}[b]
		{.45\textwidth}
		\centering
\begin{overpic}[abs,width=\textwidth,unit=1mm,scale=.25]{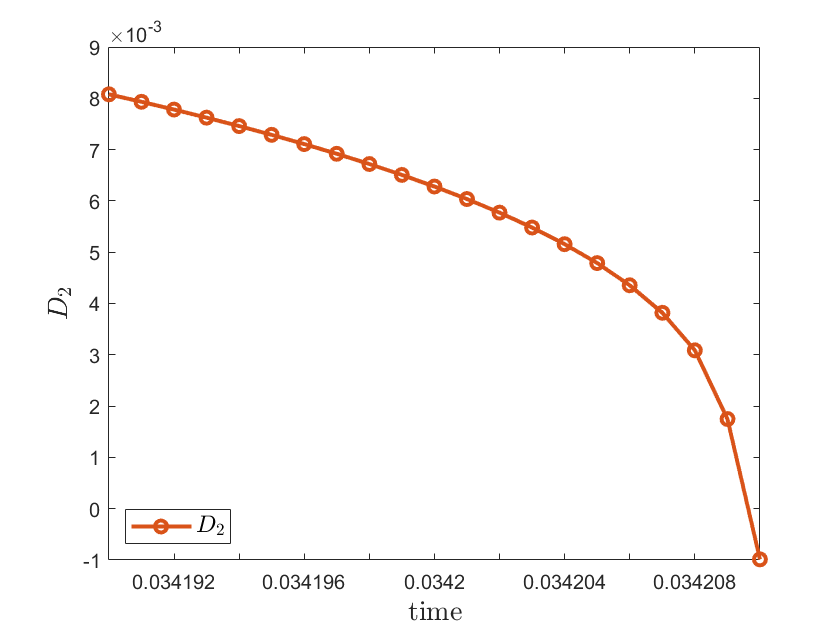}
\put(1,54){(b)}
\end{overpic}
	\end{minipage}
	\begin{minipage}[b]
		{.45\textwidth}
		\centering
	\begin{overpic}[abs,width=\textwidth,unit=1mm,scale=.25]{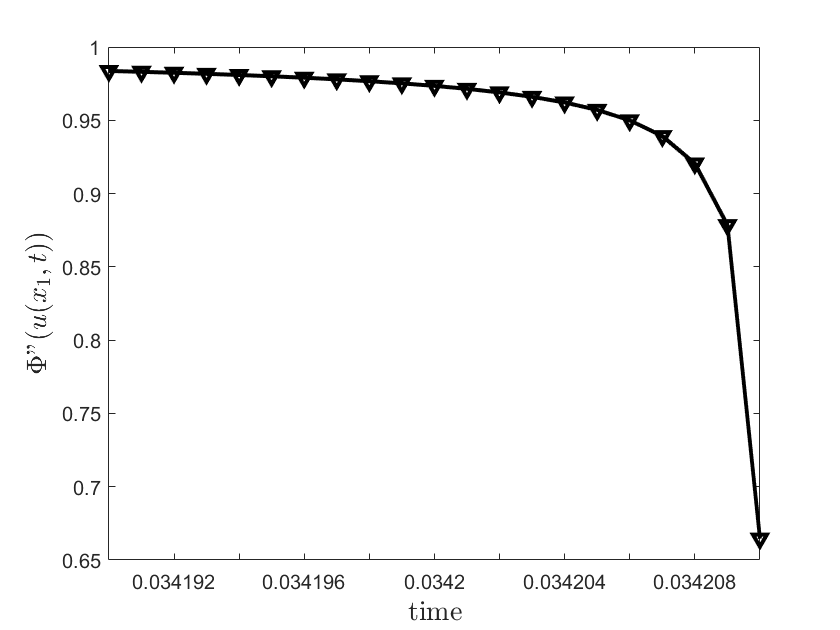}
\put(1,54){(c)}
\end{overpic}
\end{minipage}
\begin{minipage}[b]
		{.45\textwidth}
		\centering
	\begin{overpic}[abs,width=\textwidth,unit=1mm,scale=.25]{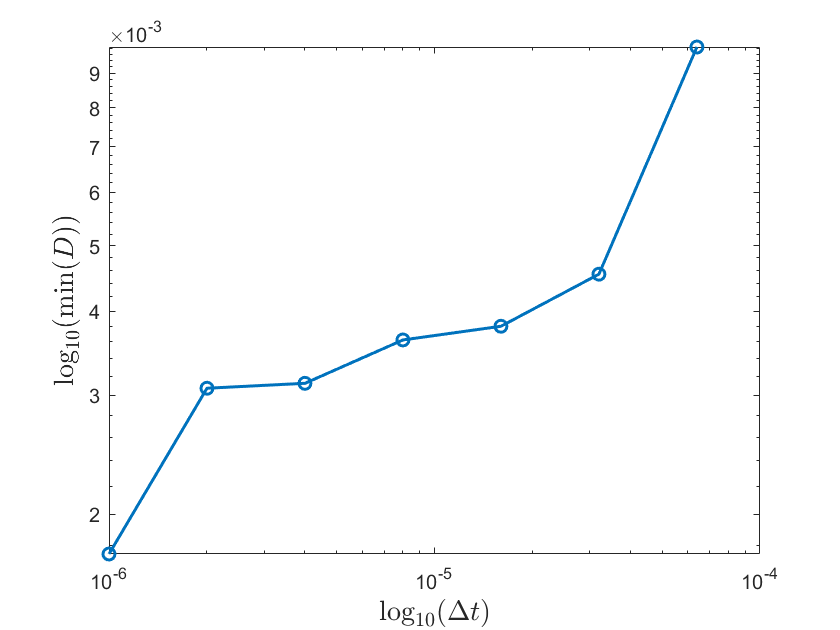}
\put(1,54){(d)}
\end{overpic}
\end{minipage}
\caption{\textit{Time evolution of the functions $D_1,D_2$  in (a), zoom of $D_2$ in (b) and of $\Phi''(u(x_1,t))$ in (c). Initial conditions ${D_1}(t = 0) = D_2(t = 0) = 0.1$ and the final time $t_{\rm fin}$ is chosen such that at time $t_{\rm fin} + \Delta t$  spurious oscillations start to appear. In this test the expression for the second derivative of the entropy function is $\Phi''(u) = (u^2(\sin(u)+2)+1)^{-1}$, with $\sigma = 2$. The other parameters are: $x_0 = 0.1, x_1 = 0.9, a = 1, b = 0.99$ and $\Delta t = 10^{-6}$. In panel (d), by varying the value of the time step, we display the minimum of $D_2$ at the final time $t_{\rm fin}$, such that at time $t_{\rm fin} + \Delta t$ the solution $D_2$ becomes negative.}}
\label{fig_u2sin_Din01_delta}
\end{figure}

\section{Conclusions}
In this paper we provided a proof of the well-posedness of self-regulating processes that model biological transportation networks, as presented in \cite{portaro2023}. Such models arise from a constrained minimization problem of an entropy dissipation coupled to the conservation law for a quantity representing the concentration of a chemical species, ions, nutrients or material pressure. Our focus was particularly directed toward the 1D setting, where we specifically investigate Dirichlet and Neumann boundary conditions. The analysis leads to either a local existence and uniqueness result if $D$ reaches zero in finite time or, alternatively, a global one. We explored several conditions under which global existence is guaranteed.

Moreover, to gain a deeper understanding of the behavior of $D$ and its influence on the system under investigation, we systematically studied a range of possible scenarios. This entails analyzing the system with a signed measure distribution of sources and sinks.

Finally, we performed several numerical tests in which the solution $D$ approaches zero. These tests validate and provide further evidence of the local existence in specific cases, as hinted at in previous considerations.

As implied by the title, a natural extension of this work would be the development of a theory in 2D, that is already under investigation.

\end{document}